\theoremstyle{theorem}
\newtheorem{theorem}{Theorem}
\newtheorem{lemma}{Lemma}
\newtheorem{corollary}{Corollary}
\theoremstyle{definition}
\newtheorem{definition}{Definition}
\theoremstyle{remark}
\newtheorem{remark}{Remark}
\newcommand{\ignore}[1]{}
\newcommand{\rev}[1]{#1}
\newcommand{\E}{\mathrm{\mathbf{E}}}
\newcommand{\1}{\mathrm{\mathbf{1}}}
\renewcommand{\P}{\mathrm{\mathbf{P}}}
\newcommand{\Q}{\mathrm{\mathbf{Q}}}
\newcommand{\sens}{\eta}
\newcommand{\Real}{\mathbb{R}}
\newcommand{\eps}{\varepsilon}
\newcommand{\abs}[1]{\left \lvert #1 \right \rvert}
\newcommand{\norm}[1]{\left \| #1 \right \|}
\DeclareMathOperator*{\adj}{adj}
\renewcommand{\t}{\mathrm{t}}
\title[Sharp entrywise perturbation bounds for Markov chains]{Sharp entrywise perturbation bounds for Markov chains}
\author{Erik Thiede, Brian Van Koten, Jonathan Weare}
\begin{document}
\begin{abstract}
For many Markov chains of practical interest,
the invariant distribution is extremely sensitive to perturbations of some entries of the transition matrix,
but insensitive to others;
we give an example of such a chain, motivated by a problem in computational statistical physics.
We have derived perturbation bounds on the relative error of the invariant distribution
that reveal these variations in sensitivity.

Our bounds are sharp, we do not impose any structural assumptions on the transition matrix or on the perturbation,
and computing the bounds has the same complexity
as computing the invariant distribution or computing other bounds in the literature.
Moreover, our bounds have a simple interpretation in terms of hitting times,
which can be used to draw intuitive but rigorous conclusions
about the sensitivity of a chain to various types of perturbations.
\end{abstract}
\maketitle

\section{Introduction}
The invariant distribution of a Markov chain is often extremely sensitive to perturbations of some entries
of the transition matrix, but insensitive to others.
However, most perturbation estimates bound the error in the invariant distribution
by a single condition number times a matrix norm of the perturbation.
That is, perturbation estimates usually take the form
\begin{equation}\label{eq: form single condition number}
\left \lvert \pi \left (\tilde{F}\right) - \pi(F) \right \rvert \leq \kappa(F) \left \lVert \tilde{F} - F \right \rVert,
\end{equation}
where $F$ and $\tilde{F}$ are the exact and perturbed
transition matrices, $\pi(F)$ and $\pi(\tilde{F})$ are the invariant distributions of these matrices,
$\lVert \cdot \rVert$ is a matrix norm,
$\lvert \cdot \rvert$ is either a vector norm or some measure of the relative error between the two distributions,
and $\kappa(F)$ is a condition number depending on $F$.
For example,
see~\cite{ChoMey:MeanFPT,GolMey:ComputingInvDist,IpsMey:UnifStab,FundMeyer:SensStationaryDist,HavivVDHeyden:PertBounds,
KirkNeuShade:PazIneqPertBounds,Meyer:Condition1980,Schweitzer:PertMarkov,Seneta:PertMeasuredByErgCfcts,
Seneta:SensitivityAnalysisRkOneUpdates} and the survey given in~\cite{ChoMey:Survey}.
No bound of this form can capture wide variations in the sensitivities of different entries of the transition matrix.

Alternatively, one might approximate the difference between $\pi$ and $\tilde{\pi}$
using the linearization of $\pi$ at $F$.
(The derivative of $\pi$ can be computed efficiently using the techniques of~\cite{GolMey:ComputingInvDist}.)
The linearization will reveal variations in sensitivities,
but only yields an approximation of the form
\begin{equation*}
  \pi\left (\tilde{F}\right) - \pi(F) = \pi'(F) \left [\tilde{F} - F \right] + o\left (\left \lVert \tilde{F}- F \right \rVert \right),
\end{equation*}
not an upper bound on the error.
That is, \rev{unless global bounds on $\pi'(F)$ can be derived},
linearization provides only a local, not a global estimate.

In this article, we give
upper bounds that yield detailed information about the sensitivity
of $\pi(F)$ to perturbations of individual entries of $F$.
Given an irreducible substochastic matrix $S$,
we show that for all stochastic matrices $F, \tilde{F}$
satisfying the entry-wise bound $F,\tilde{F} \geq S$,
\begin{align}
&\max_i \left \lvert \log \pi_i \left ( \tilde{F} \right ) - \log
\pi_i(F)\right \rvert \nonumber \\
&\qquad\qquad\leq \sum_{i \neq j}
 \left \lvert \log \left (\tilde{F}_{ij} + \Q_{ij}(S) - S_{ij} \right)
- \log(F_{ij} + \Q_{ij}(S)- S_{ij}) \right \rvert,
\label{eq: intro complicated version of bound}
\end{align}
where $\Q_{ij}(S)$ is defined in Section~\ref{sec: global bounds}.
As a corollary, we also have
\begin{equation}\label{eq: intro corollary bound}
\max_i \left \lvert \log \pi_i \left ( \tilde{F} \right ) - \log
\pi_i(F)\right \rvert
\leq \sum_{ i \neq j}
\Q_{ij}(S)^{-1} \left \lvert \tilde{F}_{ij} - F_{ij} \right \rvert
\end{equation}
for all stochastic $F,\tilde{F} \geq S$.

The difference in logarithms on the left hand sides
of~\eqref{eq: intro complicated version of bound}
and~\eqref{eq: intro corollary bound} measures
relative error.
Usually, when $\tilde{x} \in (0, \infty)$
is computed as an approximation to $x \in (0,\infty)$,
the error of $\tilde{x}$ relative to $x$ is defined to be either
\begin{equation}
  \left \lvert \frac{\tilde{x} - x}{x} \right \rvert \text{ or } \max \left \{\frac{\tilde x}{x}, \frac{x}{\tilde x} \right \}.
  \label{eq: intro definitions of relative error}
\end{equation}
Instead, we define the relative error to be
$\lvert \log \tilde{x} - \log x \rvert$.
Our definition is closely related to the other two:
it is the logarithm of the second definition
in~\eqref{eq: intro definitions of relative error};
and by Taylor expansion,
\begin{equation*}
\lvert \log \tilde{x} - \log x \rvert = \left \lvert \frac{\tilde{x} - x}{x} \right \rvert
+ O \left ( \left \lvert \frac{\tilde{x} - x}{x} \right \rvert^2 \right ),
\end{equation*}
so it is equivalent with the first in the limit of small error.
We chose our definition because it allows for simple
arguments based on logarithmic derivatives of $\pi(F)$.

We call the coefficient $\Q_{ij}(S)^{-1}$ in~\eqref{eq: intro corollary bound}
the \emph{sensitivity of the $ij^{\rm{th}}$ entry}.
$\Q_{ij}(S)$ has a simple probabilistic interpretation
in terms of hitting times,
which can sometimes be used to draw intuitive conclusions about
the sensitivities.
In Theorem~\ref{thm: sharpness},
we show that our coefficients $\Q_{ij}(S)^{-1}$
are within a factor of two of the smallest possible
so that a bound of form~\eqref{eq: intro corollary bound} holds.
Thus, our bound is sharp.
\rev{(We note that our definition of sharp differs slightly from other standard definitions;
see Remark~\ref{rem: comment on sharpness}.)}
In Theorem~\ref{thm: computing sensitivities}, we give
an algorithm by which the sensitivities may be computed in
$O(L^3)$ time for $L$ the number of states in the chain.
Therefore, computing the error bound has the same order
of complexity as computing the invariant measure
or computing most other perturbation bounds in the literature;
see Remark~\ref{rem: comparison of complexity}.

Since our result takes an unusual form,
we now give three examples to illustrate its use.
We discuss the examples only briefly here;
details follow in Sections~\ref{sec: global bounds} and~\ref{sec: hilly landscape}.
First, suppose that $\tilde{F}$ has been computed as an approximation
to an unknown stochastic matrix $F$
and that we have a bound on the error between $\tilde{F}$ and $F$,
for example $\lvert \tilde{F}_{ij} - F_{ij} \rvert \leq \alpha_{ij}$.
In this case, we define $S_{ij} := \max\{\tilde{F}_{ij} - \alpha_{ij},0\}$,
and we have the estimate
\begin{equation*}
\max_i \left \lvert \log \pi_i \left ( \tilde{F} \right ) - \log
\pi_i(F)\right \rvert
\leq \sum_{i \neq j} \Q_{ij} (S)^{-1}  \left \lvert \tilde{F}_{ij} - F_{ij} \right \rvert
\end{equation*}
for all $F$ so that $\lvert \tilde{F}_{ij} - F_{ij} \rvert \leq \alpha_{ij}$.
See Remark~\ref{rem: error estimate when true F is unknown} for a more detailed explanation.

Now suppose instead that $F \geq \alpha P$ where $0 < \alpha < 1$ and $P$
is the transition matrix of an especially simple Markov chain,
for example a symmetric random walk.
Then we choose $S := \alpha P$,
and we compute or approximate $\Q_{ij}(\alpha P)$ by
easy to understand probabilistic arguments.
This method can be used to draw intuitive
but rigorous conclusions about the sensitivity of a chain to various types of perturbations.
See Section~\ref{sec: bound below by random walk} for details.

Finally, suppose that the transition matrix $F$
has a large number of very small positive entries
and that we desire a sparse approximation $\tilde{F}$ to $F$
with approximately the same invariant distribution.
In this case, we take $S$ to be $F$ with all its small
positive entries set to zero.
If the sensitivity $\Q_{ij}(S)^{-1}$ is very large,
it is likely that the the value of $F_{ij}$ is important
and cannot be set to zero.
If $\Q_{ij}(S)^{-1}$ is small, then setting $\tilde{F}_{ij} =0$
and $\tilde{F}_{ii} = F_{ii} + F_{ij}$ will not have much effect on the invariant distribution.

We are aware of two other bounds on relative error in the literature.
By \cite[Theorem~4.1]{IpsMey:UnifStab},
\begin{equation}\label{eq: intro ipsen meyer bound}
 \left \lvert \frac{\pi_i(\tilde{F})- \pi_i(F)}{\pi_i(F)} \right \rvert
\leq \kappa_i (F) \left  \lVert \left (\tilde{F} - F \right )\left (I - e_i e_i^\t \right ) \right \rVert_\infty,
\end{equation}
where $\kappa_i(F) = \left \lVert (I - F_i)^{-1}  \right \rVert_\infty$
for $F_i$ the $i^{\rm{th}}$ principal submatrix of $F$.
This bound fails to identify the sensitive and insensitive entries
of the transition matrix since the error in the $i^{\rm{th}}$ component of the
invariant distribution is again controlled only by the single condition number $\kappa_i(F)$.
Moreover, computing
$\kappa_i(F)$ for all $i$ is of the same complexity
as computing all of our sensitivities $\Q_{ij}(S)^{-1}$;
see Remark~\ref{rem: comparison of complexity}.
Therefore, in many respects, our result provides more detailed information at the same cost
as~\cite[Theorem~4.1]{IpsMey:UnifStab}.
On the other hand, we observe that~\cite[Theorem~4.1]{IpsMey:UnifStab} holds for all perturbations:
one does not have to restrict the admissible perturbations by requiring $\tilde{F} \geq S$
as we do for our result.
However, this is not always an advantage,
since we anticipate that in many applications bounds on the error in $F$ are available and,
as we will see, the benefit from using this information is significant.

In~\cite[Theorem~1]{OC:RelErr}, another bound on the relative error is given.
Here, the relative error in the invariant distribution
is bounded by the relative error in the transition matrix.
Precisely, if $F,\tilde{F} \in \Real^{L \times L}$ are irreducible stochastic matrices
with $F_{ij} = 0$ if and only if $\tilde{F}_{ij} =0$,
then
\begin{equation}\label{eq: oc bound from intro}
\max_m \max \left \{\frac{\tilde{\pi}_m}{\pi_m}, \frac{\pi_m} {\tilde{\pi}_m}\right \}
\leq \left ( \max_{i \neq j}
\max \left \{\frac{\tilde{F}_{ij}}{F_{ij}}, \frac{F_{ij}} {\tilde{F}_{ij}}\right \} \right )^L
\end{equation}
This surprising result requires no condition number depending on $F$,
but it does require that $F$ and $\tilde{F}$ have the same sparsity pattern,
which greatly restricts the admissible perturbations.
Our result may be understood as a generalization of~\eqref{eq: oc bound from intro}
which allows perturbations changing the sparsity pattern.

Our result also bears some similarities with the analysis in~\cite[Section~4]{ChoMey:MeanFPT},
which is based on the results of~\cite{IpsMey:UnifStab}.
In~\cite{ChoMey:MeanFPT}, a state $m$ of a Markov chain is said to be \emph{centrally located}
if $\E_{i}[\tau_m]$ is small for all states $i$.
(Here, $\tau_m$ is the first passage time to state $m$;
see Section~\ref{sec: notation}.)
It is shown that if $|\E_{i}[\tau_m] - \E_j[\tau_m]|$ is small,
then $\pi_m(F)$ is insensitive to $F_{ij}$ in relative error.
Therefore, if $m$ is centrally located, $\pi_m(F)$ is not sensitive to any entry of the transition matrix.
Our $\Q_{ij}^{-1}$ can also be expressed in terms of first passage times,
and they provide a better measure of the sensitivity of $\pi_m(F)$ to $F_{ij}$ than
$|\E_{i}[\tau_m] - \E_j[\tau_m]|$;
see Section~\ref{sec: bounds by mean fpt}.

\rev{
Our bounds on derivatives of $\pi(F)$ in Theorem~\ref{thm: bound on log deriv}
and our estimates~\eqref{eq: intro complicated version of bound}
and~\eqref{eq: intro corollary bound}
share some features with structured condition numbers~\cite[Section~5.3]{KirklandNeumann:GroupInvMMatrices}.
The structured condition number of an irreducible, stochastic matrix $F$ is defined to be
\begin{equation*}
\begin{split}
  &\mathcal{C}^{(p,q)} (F) \\ &\qquad:= \lim_{\eps \rightarrow 0}
  \sup \left \{\eps^{-1} \left \lVert \pi(F+E) - \pi(F) \right \lVert_{p} :
  F + E \text{ stochastic and } \lVert E \rVert_q < \eps \right \}.
  \end{split}
\end{equation*}
Structured condition numbers yield approximate bounds valid for small perturbations.
These bounds are useful, since for small perturbations,
estimates of type~\eqref{eq: form single condition number} are often far too pessimistic.
We remark that our results~\eqref{eq: intro complicated version of bound}
and~\eqref{eq: intro corollary bound} give the user control over the size
of the perturbation through the choice of $S$.
(If $S$ is nearly stochastic, then only small perturbations are allowed.)
Therefore, like structured condition numbers, our results are good for small perturbations.
In addition, our results are true upper bounds, so they are more
robust than approximations derived from structured condition numbers.}

Our interest in perturbation bounds for Markov chains
arose from a problem in computational statistical physics;
we present a drastically simplified version below in Section~\ref{sec: hilly landscape}.
For this problem, the invariant distribution is extremely sensitive to some entries of
the transition matrix, but insensitive to others.
We use the problem to illustrate the differences between our result,~\cite[Theorem~1]{OC:RelErr},~\cite[Theorem~4.1]{IpsMey:UnifStab},
and the eight bounds on absolute error surveyed in~\cite{ChoMey:Survey}.
Each of the eight bounds has form~\eqref{eq: form single condition number},
and we demonstrate that the condition number $\kappa(F)$ in each bound
blows up exponentially with the inverse temperature parameter in our problem.
By contrast, many of the sensitivities $\Q_{ij}^{-1}$ from our result
are bounded as the inverse temperature increases.
Thus, our result gives a great deal more information about which
perturbations can lead to large changes in the invariant distribution.

\section{Notation}
\label{sec: notation}
We fix $L \in \mathbb{N}$,
and we let $X$ be a discrete time Markov chain
with state space $\Omega = \{1, 2, \dots, L\}$
and irreducible, row-stochastic transition matrix $F \in \Real^{L \times L}$.
Since $F$ is irreducible, $X$ has a unique invariant distribution $\pi \in \Real^{L}$ satisfying
\begin{equation*}
  \pi^\t F = \pi^\t \text{, } \sum_{i=1}^L \pi_i = 1 \text{, and } \pi_i >0 \text{ for all } i =1, \dots, L.
\end{equation*}

We let $e_i$ denote the $i^{\rm{th}}$ standard basis vector in $\Real^L$,
$e$ denote the vector of ones, and $I$ denote the identity matrix.
We treat all vectors, including $\pi$, as column vectors (that is, as $L \times 1$ matrices).
For $S \in \Real^{L\times L}$,
we let $S_j:e_j^\bot \rightarrow e_j^\bot$ be the operator defined by
\begin{equation*}
S_j x:= (I - e_j e_j^\t) S x.
\end{equation*}
Instead of defining $S_j$ as above, we could define $S_j$ to be $S$ with the $j^{\rm{th}}$
row and column set to zero.
We could also define $S_j$ to be the $j^{\rm{th}}$ principal submatrix.
We chose our definition to emphasize that we
treat $S_j$ as an operator on $e_j^\bot$.
If $S$ and $T$ are matrices of the same dimensions,
we say $S \geq T$ if and only if $S_{ij} \geq T_{ij}$ for all indices $i,j$.
\rev{For any $v \in \Real^L$ with $v > 0$, we define $\log(v) \in \Real^L$ by $(\log v)_i = \log(v_i)$.}

\rev{
For $k \in \{1,2,\dots, L\}$, we define $\1_k$ to be the indicator function of the set $\{k\}$,
and
\begin{equation*}
  \tau_k := \min \{s> 0 : X_s = k\}
\end{equation*}
to be the first return time to state $k$.
We also define
\begin{equation*}
\P_k[A] :=\P[A \vert X_0 = k] \text{ and } \E_k[Y] := \E[Y \vert X_0 = k]
\end{equation*}
to be the probability of the event $A$ conditioned on $X_0 = k$ and
the expectation of the random variable $Y$ conditioned on $X_0 = k$, respectively.
Finally, for $Y$ a random variable and $B$ an event, we let
\begin{equation*}
  \E[Y, B] := \E [Y \chi_B ] = \E [Y \vert B] \P [B],
\end{equation*}
where $\chi_B$ is the indicator function of the event $B$.
}

\section{Partial derivatives of the invariant distribution}
Given an irreducible, stochastic matrix $F \in \Real^{L \times L}$,
let $\pi(F) \in \Real^{L}$ be the invariant distribution of $F$;
that is, let $\pi(F)$ be the unique solution of
\begin{equation*}
  \pi(F)^\t F = \pi(F)^\t \mbox{ and } \pi(F)^\t e = 1.
\end{equation*}
We regard $\pi$ as a function defined on the set of irreducible stochastic matrices,
and in Lemma~\ref{lem: inv dist is differentiable}, we show that $\pi$ is differentiable
in a certain sense. We give a proof of the lemma in Appendix~\ref{app: proof inv dist is differentiable}.

\begin{lemma}\label{lem: inv dist is differentiable}
The function $\pi$ admits a continuously differentiable extension $\bar{\pi}$
to an open neighborhood $\mathcal{V}$
of the set of irreducible stochastic matrices in $\Real^{L \times L}$.
The extension may be chosen so that $\bar{\pi}(G) >0$ for all $G \in \mathcal{V}$
and so that if $Ge = e$, then
\begin{equation*}
  \bar{\pi} (G)^\t G = \bar{\pi} (G)^\t \mbox{ and } \bar{\pi}(G)^\t e = 1.
\end{equation*}
\end{lemma}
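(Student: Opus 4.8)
The plan is to build the extension $\bar\pi$ explicitly from a standard formula for the invariant distribution, and then to check smoothness and the two stated properties. Recall that for an irreducible stochastic $F$, the invariant distribution can be written using the group inverse or, more elementarily, via the cofactors of $I-F$: if $A(G) := I - G$ and $\operatorname{adj}(A(G))$ is its adjugate, then for stochastic irreducible $F$ every row of $\operatorname{adj}(A(F))$ is a positive multiple of $\pi(F)^\t$, and in particular $\pi(F)^\t = \big(e^\t \operatorname{adj}(A(F))\, e\big)^{-1} e^\t \operatorname{adj}(A(F))$ — or, more symmetrically, one can take a fixed row. So my first step is to set, for $G$ in a neighborhood of the irreducible stochastic matrices,
\begin{equation*}
\bar\pi(G)^\t := \frac{u^\t \operatorname{adj}(I - G)}{u^\t \operatorname{adj}(I-G)\, e}
\end{equation*}
for a suitable fixed vector $u$ (e.g. $u = e$), provided the denominator is nonzero there. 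The entries of $\operatorname{adj}(I-G)$ are polynomials in the entries of $G$, hence $C^\infty$; so $\bar\pi$ is $C^\infty$ (a fortiori $C^1$) on the open set where the denominator does not vanish, and that open set contains all irreducible stochastic matrices because there the denominator is a positive multiple of $u^\t\pi(F) = 1 > 0$. This handles differentiability and the existence of $\mathcal V$.

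Next I would verify the invariance property: if $Ge = e$, i.e. $(I-G)e = 0$, then $e$ is a null vector of $I-G$, so $\operatorname{adj}(I-G)(I-G) = \det(I-G)\, I = 0$, and combined with $(I-G)\operatorname{adj}(I-G) = 0$ one gets that each row of $\operatorname{adj}(I-G)$ is a left null vector of $I-G$; hence $\bar\pi(G)^\t(I-G) = 0$, i.e. $\bar\pi(G)^\t G = \bar\pi(G)^\t$. The normalization $\bar\pi(G)^\t e = 1$ is immediate from the definition (the denominator is exactly $u^\t\operatorname{adj}(I-G)e$). For $G$ irreducible stochastic these two properties plus positivity force $\bar\pi(G) = \pi(G)$, so $\bar\pi$ genuinely extends $\pi$.

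The remaining point — and the one I expect to need the most care — is the positivity requirement $\bar\pi(G) > 0$ for \emph{all} $G \in \mathcal V$, since the adjugate formula only gives positivity near stochastic irreducible matrices, not automatically on a full neighborhood, and a priori entries could vanish or change sign on a large $\mathcal V$. The fix is simply to shrink $\mathcal V$: the set $\{\,G : \bar\pi(G) > 0\,\}$ is open (each coordinate is continuous) and contains every irreducible stochastic matrix (there $\bar\pi(G) = \pi(G) > 0$), so I replace $\mathcal V$ by its intersection with this set, which is still an open neighborhood of the irreducible stochastic matrices and on which all the claimed properties continue to hold. One should also double-check that a single fixed $u$ works uniformly — i.e. that $u^\t\operatorname{adj}(I-F)e \neq 0$ for every irreducible stochastic $F$ — which holds with $u = e$ because that quantity equals $u^\t\pi(F) = 1$ after normalization is unwound; if one prefers not to fix a row, an alternative is to average over rows or to use the group-inverse expression $\pi(F)^\t = e^\t\big(I - F + e\pi(F)^\t\big)^{-1}$-type formulas, but the adjugate route keeps everything polynomial and avoids any implicit-function-theorem bookkeeping.
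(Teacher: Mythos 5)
Your proposal is correct, and it follows the same basic strategy as the paper: extend $\pi$ by an explicit adjugate/cofactor formula, observe that the entries of $\adj(I-G)$ are polynomials in $G$ so the extension is smooth wherever the normalizing denominator is nonzero, and then shrink the neighborhood to preserve positivity. The one genuine difference is in which functional of the adjugate you normalize. The paper uses the diagonal of the adjugate, $\bar{\pi}(G)^\t \propto (\det(I-G_1), \dots, \det(I-G_L))$, and to verify $\bar{\pi}(G)^\t G = \bar{\pi}(G)^\t$ for $Ge=e$ it must first show that $\adj(I-G)$ has the rank-one form $e\,w^\t$ (so that the diagonal coincides with a row); this requires knowing $\ker(I-G)=\Real e$, which the paper gets from a spectral perturbation argument (the eigenvalue near $1$ stays simple on a neighborhood, via Kato). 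Your choice $\bar{\pi}(G)^\t \propto u^\t \adj(I-G)$ is a fixed linear combination of rows of the adjugate, and since $Ge=e$ forces $\det(I-G)=0$, the identity $\adj(I-G)(I-G)=\det(I-G)I=0$ immediately makes every row --- hence your combination --- a left null vector of $I-G$. This bypasses the simple-eigenvalue step entirely, which is a small but real simplification; the price is only the need to check $u^\t\adj(I-F)e\neq 0$ on the irreducible stochastic matrices, which you correctly reduce (for $u=e$) to the positivity of the cofactors. Both versions then recover $\bar{\pi}=\pi$ on the irreducible stochastic matrices from uniqueness of the normalized left eigenvector, so the extension is genuine.
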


\begin{remark}
The set of stochastic matrices is not a vector space;
it is a compact, convex polytope lying in the affine space
$\{G \in \Real^{L \times L} : Ge = e\}\subset \Real^{L \times L}$.
As a consequence, we need the extension guaranteed by Lemma~\ref{lem: inv dist is differentiable}
to define the derivative of $\pi$ on the boundary of the polytope,
which is the set of all stochastic matrices with at least one zero entry.
We introduce $\bar{\pi}$ only to resolve this unpleasant technicality,
not to define $\pi(F)$ for matrices which are not stochastic.
In fact, all our results are independent of the particular choice of extension,
as long as it meets the conditions in the second sentence of the lemma.
\end{remark}

Our perturbation bounds are based on partial derivatives of $\pi$
with respect to entries of $F$.
As usual, the partial derivatives are defined in terms of a coordinate system,
and we choose the off-diagonal entries of $F$ as coordinates:
Any stochastic $F$ is determined by its off-diagonal entries through the formula
\begin{equation*}
F = I + \sum_{\substack{i, j \in \Omega \\ i \neq j}} F_{ij} \left (e_i e_j^\t - e_i e_i^\t \right ).
\end{equation*}
Accordingly, for $i,j \in \Omega$ with $i \neq j$, we define
\begin{align}
  \frac{\partial \pi_m}{\partial F_{ij}}(F) &=
\frac{\partial}{\partial F_{ij}} \bar{\pi}_m \left (I + \sum_{k \neq l} F_{kl} \left (e_k e_l^\t - e_k e_k^\t \right ) \right) \nonumber\\
&= \left . \frac{d}{d\eps} \right |_{\eps = 0} \bar{\pi}_m \left (F + \eps \left (e_i e_j^\t - e_i e_i^\t \right ) \right).
\label{eq: defn of partial derivatives}
\end{align}
\emph{These partial derivatives must be understood as derivatives
of the extension $\bar{\pi}$ guaranteed by Lemma~\ref{lem: inv dist is differentiable}.}
Otherwise, if $F_{ij}$ or $F_{ii}$ were zero,
the right hand side of~\eqref{eq: defn of partial derivatives} would be undefined.

\rev{
\begin{remark}
We chose to define partial derivatives by~\eqref{eq: defn of partial derivatives},
since that definition leads to the global bounds on the invariant distribution
presented in Section~\ref{sec: global bounds}.
Other choices are reasonable: for example, one might consider derivatives of the form
\begin{equation*}
\left . \frac{d}{d\eps} \right \rvert_{\eps = 0} \pi \left (F+\eps \left (e_i e_j^\t - e_i e_k^\t \right ) \right ),
\end{equation*}
where $k \neq i$ and $j \neq i$.
However, to the best of our knowledge, only definition~\eqref{eq: defn of partial derivatives}
leads easily to global bounds.
\end{remark}}

In Theorem~\ref{thm: derivative of invariant measure},
we derive a convenient formula for $\frac{\partial \pi_m}{\partial F_{ij}}$.
Comparable results relating derivatives and perturbations of $\pi$
to the matrix of mean first passage times
were given in~\cite{ChoMey:MeanFPT,Hunter:StationaryDistMeanFPT2005}.
\rev{A formula for the derivative of the invariant distribution in terms of the group inverse of $I-F$
was given in~\cite{Meyer:GroupInvMarkovChains1975};
a general formula for the derivative of the Perron vector of a nonnegative matrix was
given in~\cite{DeutschNeumann:DerivativesPerronVect1985}.}

\begin{theorem}\label{thm: derivative of invariant measure}
Let $F$ be an irreducible stochastic matrix,
and let $X$ be a Markov chain with transition matrix $F$.
Define $\pi(F)$ and $\frac{\partial \pi_m}{\partial F_{ij}}$ as above.
We have
\begin{align*}
\frac{\partial \pi_m}{\partial F_{ij}}(F)
&=
\pi_i \left (
\E_j \left [\sum_{s=0}^{\tau_i -1} \1_m(X_s) \right ] - \pi_m \E_j [\tau_i]
\right )
\end{align*}
for all $i,j \in \Omega$ with $i \neq j$.
\end{theorem}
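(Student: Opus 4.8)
The plan is to compute the derivative $\frac{d}{d\eps}\big|_{\eps=0}\bar\pi_m(F_\eps)$, where $F_\eps := F + \eps(e_i e_j^\t - e_i e_i^\t)$, by differentiating the two defining relations $\bar\pi(F_\eps)^\t F_\eps = \bar\pi(F_\eps)^\t$ and $\bar\pi(F_\eps)^\t e = 1$. Writing $\pi = \pi(F)$ and $v := \frac{d}{d\eps}\big|_{\eps=0}\bar\pi(F_\eps)$, and using $\frac{d}{d\eps}F_\eps = e_i(e_j - e_i)^\t =: \Delta$, differentiation gives the linear system $v^\t F + \pi^\t \Delta = v^\t$ and $v^\t e = 0$; equivalently $v^\t(I - F) = \pi^\t \Delta = \pi_i(e_j - e_i)^\t$ together with the normalization $v^\t e = 0$. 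So the whole problem reduces to solving $(I-F)^\t v = \pi_i(e_j - e_i)$ subject to $v \perp e$, and then identifying the $m$th coordinate of the solution with the claimed probabilistic expression.

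The key identity I would invoke is the standard representation of the (uniquely determined, mean-zero) solution of $v^\t(I-F) = c^\t$ for $c \perp \pi$ — note $(e_j - e_i) \cdot \pi$ times $\pi_i$ is not zero, but $c^\t = \pi_i(e_j-e_i)^\t$ does satisfy $c^\t e = 0$, which is exactly the solvability condition for the transposed system since $(I-F)e = 0$ forces the range of $(I-F)^\t$ to be $e^\bot$. To resolve the one-dimensional kernel (spanned by $\pi$) I use the constraint $v^\t e = 0$. A clean way to get the explicit formula is to write a particular solution using first-passage decompositions: define $w_m := \E_j\!\big[\sum_{s=0}^{\tau_i - 1}\1_m(X_s)\big]$, the expected number of visits to $m$ started from $j$ before hitting $i$ (with the convention that if $j=i$ this sum is empty, giving $w = 0$, consistent with the stated formula only after the $\pi_m\E_j[\tau_i]$ correction — I should double-check the $j=i$ edge case is excluded since $i\neq j$). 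One shows by a one-step / strong Markov argument that $w$ solves $((I-F)^\t w)_m = \1\{m=j\} - \P_j[X_{\tau_i - 0}\text{-type}]$... more precisely that $w^\t(I-F) = (e_j - e_i)^\t$ up to the boundary term at $i$; combined with the well-known fact $\E_j[\tau_i] = \sum_m w_m$ and $\pi_i\E_i[\tau_i] = 1$ (Kac's formula), one assembles $v^\t := \pi_i w^\t - \pi_i\E_j[\tau_i]\,\pi^\t$, checks $v^\t(I-F) = \pi_i(e_j-e_i)^\t$ and $v^\t e = \pi_i\E_j[\tau_i] - \pi_i\E_j[\tau_i] = 0$, and reads off the $m$th coordinate, which is exactly $\pi_i\big(\E_j[\sum_{s=0}^{\tau_i-1}\1_m(X_s)] - \pi_m\E_j[\tau_i]\big)$.

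The main obstacle I anticipate is twofold. First, one must justify that differentiation of the implicit relations is legitimate — but Lemma~\ref{lem: inv dist is differentiable} already supplies a $C^1$ extension $\bar\pi$ satisfying the invariance relations whenever $Ge = e$, and $F_\eps e = e$ for all small $\eps$, so this is handled by the cited lemma. Second, and more delicate, is verifying that $w$ as defined above really satisfies the claimed near-harmonic equation $w^\t(I-F) = (e_j-e_i)^\t$: the first-passage bookkeeping must correctly account for the visit at time $0$ (contributing $e_j^\t$) and the "loss" of mass when the chain is absorbed at $i$ (contributing $-e_i^\t$), and one has to be careful that $w_i$ itself is $0$ by definition (visits to $i$ strictly before $\tau_i$ number zero), which is what makes the boundary term land precisely on $e_i$. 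I would establish this via a standard conditioning on the first step: for $m \neq i$, $w_m = \1\{j=m\} + \sum_{k} F_{jk} \E_k[\sum_{s=0}^{\tau_i-1}\1_m(X_s)] \cdot(\text{care with }k=i)$, i.e. $w_m = \1\{j=m\} + (Fw)_m - \1\{j=i\}(\cdots)$; since $i\neq j$ the second correction vanishes and one gets $((I-F)w)_m = \1\{j=m\}$ for $m\neq i$, while the $i$th component is pinned by the normalization. Once that equation is in hand, the rest is the linear-algebra assembly above, which is routine.
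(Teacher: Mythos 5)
Your overall strategy is correct and genuinely different from the paper's. The paper first normalizes by setting $v(G) := \bar{\pi}(G)/\bar{\pi}_i(G)$, so that differentiating the invariance relation yields a \emph{nonsingular} system $\frac{\partial v}{\partial F_{ij}}^\t (I-F_i) = e_j^\t$ on $e_i^\bot$; it then inverts via the Neumann series $(I-F_i)^{-1}=\sum_s F_i^s$, reads off the occupation-time interpretation term by term, and finally converts back to $\pi$ with the quotient rule together with $\pi_k/\pi_i = \E_i[\sum_{s=0}^{\tau_i-1}\1_k(X_s)]$ and $1/\pi_i=\E_i[\tau_i]$. You instead keep the normalization $\bar{\pi}^\t e=1$, arrive at the singular system $v^\t(I-F)=\pi_i(e_j-e_i)^\t$, $v^\t e=0$, and verify a candidate solution; uniqueness holds because the left kernel of $I-F$ is spanned by $\pi$ (simplicity of the Perron eigenvalue for irreducible $F$) and $\pi^\t e=1\neq 0$. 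This guess-and-verify route is legitimate and avoids the quotient-rule bookkeeping at the end.

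The one step you flag as delicate is, however, the one where your sketched justification fails. The identity you need, $w^\t(I-F)=(e_j-e_i)^\t$ with $w_m=\E_j[\sum_{s=0}^{\tau_i-1}\1_m(X_s)]$, concerns $w_m-\sum_k w_k F_{km}$, i.e., $w$ contracted against the \emph{source} index of $F$. First-step conditioning, which is what you invoke via ``$w_m=\1_m(j)+\sum_k F_{jk}\E_k[\cdots]$,'' produces a recursion in the starting state $j$ (in matrix form, precisely the relation $e_j^\t(I-F_i)^{-1}$ that the paper uses), and $\sum_k F_{jk}\E_k[\cdots]$ is not $(Fw)_m$; so the displayed recursion $w_m=\1_m(j)+(Fw)_m$ is not what that conditioning gives. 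The correct verification is a last-step (entrance-counting) argument: by the Markov property, $\sum_k w_k F_{km}=\E_j\left[\#\{1\le s\le\tau_i: X_s=m\}\right]$, since each visit to $k$ at a time $s<\tau_i$ contributes probability $F_{km}$ of entering $m$ at time $s+1$. For $m\neq i$ this equals $w_m-\1_m(j)$ (shifting the counting window from $\{0,\dots,\tau_i-1\}$ to $\{1,\dots,\tau_i\}$ loses only the visit at time $0$, and gains nothing at time $\tau_i$ because $X_{\tau_i}=i$), while for $m=i$ it equals $1$ and $w_i=0$. Hence $(w^\t(I-F))_m=\1_m(j)-\1_m(i)$, as required. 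With that step repaired, your assembly $v^\t=\pi_i w^\t-\pi_i\E_j[\tau_i]\,\pi^\t$, the checks $v^\t(I-F)=\pi_i(e_j-e_i)^\t$ and $v^\t e=0$ (using $\sum_m w_m=\E_j[\tau_i]$), and the uniqueness argument yield the theorem.
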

\begin{proof}
Define $\bar{\pi}$ and $\mathcal{V}$ as in Lemma~\ref{lem: inv dist is differentiable},
let $v(G) := \bar{\pi}(G)/\bar{\pi}_i(G)$ for all $G \in \mathcal{V}$,
and let $G^\eps := F + \eps (e_i e_j^\t - e_i e_i^\t)$.
Define
\begin{equation*}
 \frac{\partial v}{\partial F_{ij}} (F)
:= \left (\frac{\partial v_i}{\partial F_{ij}} (F), \dots, \frac{\partial v_L}{\partial F_{ij}} (F) \right ) = \left .\frac{d}{d\eps} v(G^\eps) \right \rvert_{\eps = 0}.
\end{equation*}
Since $G^\eps e = e$,
Lemma~\ref{lem: inv dist is differentiable} implies
\begin{equation}\label{eq: properties of v}
v(G^\eps)^\t (I - G^\eps) = 0 \mbox{ and } v_i(G^\eps) = 1
\end{equation}
for all $\eps$ sufficiently close to zero.

We derive an equation for
$\frac{\partial v}{\partial F_{ij}} (F)$ from~\eqref{eq: properties of v};
differentiating\eqref{eq: properties of v} with respect to $\eps$ gives
\begin{equation}\label{eq: first equation satisfied by derivative}
 \frac{\partial v}{\partial F_{ij}} (F)^\t (I - F) = e_j^\t - e_i^\t  \mbox{ and }
\frac{\partial v_i}{\partial F_{ij}} (F) = 0.
\end{equation}
Recalling the definition of $F_i$ from Section~\ref{sec: notation},
~\eqref{eq: first equation satisfied by derivative} implies
\begin{equation*}
\frac{\partial v}{\partial F_{ij}}(F)^\t (I - F_i)
= e^\t_j.
\end{equation*}
Moreover, by~\cite[Chapter~6, Theorem~4.16]{BerPle},
\begin{equation}
\text{$F$ irreducible and stochastic or substochastic implies } (I-F_i)^{-1} = \sum_{s=0}^\infty F_i^s.
\label{eq: irred and stochastic implies convergent}
\end{equation}
Therefore, we have
\begin{equation}
\frac{\partial v}{\partial F_{ij}} = e^\t_j(I - F_i)^{-1}
= e^\t_j \sum_{k=0}^\infty F_i^k. \label{eq: power series for derivative of invariant measure}
\end{equation}

We now interpret~\eqref{eq: power series for derivative of invariant measure}
in terms of the Markov chain $X_t$ with transition matrix $F$.
We observe that for any $m \in \Omega \setminus \{i\}$,
\begin{align*}
e^\t_j  F_{i}^k e_m
&= \P_j[X_k = m, k < \tau_i],
\end{align*}
where $\tau_i := \min \{t > 0 : X_t = i\}$ is the first passage time to state $i$.
Therefore, for $m ,j \in \Omega\setminus \{i\}$,~\eqref{eq: power series for derivative of invariant measure} yields
\begin{equation}
\frac{\partial v_m}{\partial F_{ij}}
= \sum_{k = 0}^\infty \P_j[X_k = m, k < \tau_i]
=\E_j \left [\sum_{s=0}^{\tau_i -1} \1_m(X_s) \right ].
\label{eq: formula for derivatives of v}
\end{equation}
In fact, this formula also holds for $m = i$,
since we have
\begin{displaymath}
\frac{\partial v_i}{\partial F_{ij}}
= \E_j \left[ \sum_{s=0}^{\tau_i -1} \1_i(X_s) \right] = 0
\end{displaymath}
 for all $j \in \Omega \setminus \{i\}$.

Finally, we convert our formula for $\frac{\partial v_m}{\partial F_{ij}}$
to a formula for $\frac{\partial \pi_m}{\partial F_{ij}}$.
We have
\begin{equation*}
\pi_m = \frac{v_m}{\sum_{k=1}^L v_k},
\end{equation*}
 and so by~\eqref{eq: formula for derivatives of v},
\begin{align}
\frac{\partial \pi_m}{\partial F_{ij}} &=
\frac{\frac{\partial v_m}{\partial F_{ij}} \left (\sum_{k=1}^L v_k \right )
- v_m \sum_{k=1}^L \frac{\partial v_k}{\partial F_{ij}}}
{\left (\sum_{k=1}^L v_k \right )^2} \nonumber \\
&= \frac{\E_j \left [\sum_{s=0}^{\tau_i -1} \1_m(X_s) \right ] - \pi_m \E_j [\tau_i]}{\sum_k v_k}.
\label{eq: normalization of formula for derivative}
\end{align}
Now by~\cite[Theorem~1.7.5]{Nor:MarkovBook}, we have
\begin{equation}
  v_k(F) = \frac{\pi_k(F)}{\pi_i(F)} = \E_i \left [ \sum_{s=0}^{\tau_i-1} \1_k(X_s) \right ] \text{ and } \frac{1}{\pi_i(F)} = \E_i[\tau_i]. \label{eq: relation of inv dist to mean fpt}
\end{equation}
Therefore,~\eqref{eq: normalization of formula for derivative} implies
\begin{equation*}
\frac{\partial \pi_m}{\partial F_{ij}}
= \pi_i \left ( \E_j \left [\sum_{s=0}^{\tau_i -1} \1_m(X_s) \right ] - \pi_m \E_j [\tau_i] \right ).
\end{equation*}

\end{proof}

Our goal is to bound the relative errors of the entries
of the invariant measure $\pi(F)$,
where for $ \tilde{x},x \in (0, \infty)$,
we define the relative error between $\tilde{x}$ and ${x}$ to be
\begin{equation}\label{eq: defn of relative error}
  \lvert \log \tilde{x} - \log x \rvert.
\end{equation}
Our definition of relative error
is unusual, but it is closely related to the common definitions,
as we explain in the introduction.
In Theorem~\ref{thm: bound on log deriv},
we derive sharp bounds on the logarithmic partial derivatives of the invariant distribution.
We want bounds on logarithmic derivatives,
since we will ultimately prove bounds on the relative error in $\pi$,
with relative error defined by~\eqref{eq: defn of relative error}.

The following lemma will be used in the proof of Theorem~\ref{thm: bound on log deriv}.

\begin{lemma}\label{lem: decomposition of occupation time}
We have
\begin{align*}
 \P_i [\tau_j < \tau_i] \E_j \left [\sum_{s=0}^{\tau_i-1} \1_m(X_s) \right ]
&= \E_i \left [ \sum_{s = \tau_j}^{\tau_i - 1} \1_m(X_s) \right ],
\end{align*}
and
\begin{align*}
   \P_i [\tau_j < \tau_i] \E_j [\tau_i] &= \E_i [\tau_i - \tau_j, \tau_j < \tau_i].
\end{align*}
\end{lemma}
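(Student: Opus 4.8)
The plan is to apply the strong Markov property of $X$ at the stopping time $\tau_j$, on the event $\{\tau_j < \tau_i\}$. Throughout, recall that since $F$ is irreducible on a finite state space the chain is positive recurrent, so $\tau_i$ and $\tau_j$ are a.s.\ finite and $\E_j[\tau_i] < \infty$; in particular every quantity below is well defined. Note also that $i \neq j$ forces $\tau_i \neq \tau_j$, so up to a null set the sample space splits into $\{\tau_j < \tau_i\}$ and $\{\tau_i < \tau_j\}$.

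First I would record two pathwise identities valid under $\P_i$ on the event $\{\tau_j < \tau_i\}$. On this event the path avoids $i$ through time $\tau_j$, so writing $\hat\tau_i := \min\{t > 0 : X_{\tau_j + t} = i\}$ for the first hitting time of $i$ by the shifted process $(X_{\tau_j + t})_{t \geq 0}$, we have $\tau_i = \tau_j + \hat\tau_i$, and re-indexing the occupation sum by $s = \tau_j + t$ gives
\begin{equation*}
\sum_{s = \tau_j}^{\tau_i - 1} \1_m(X_s) = \sum_{t = 0}^{\hat\tau_i - 1} \1_m(X_{\tau_j + t})
\qquad \text{and} \qquad
\tau_i - \tau_j = \hat\tau_i .
\end{equation*}
On the complementary event $\{\tau_i < \tau_j\}$ the sum $\sum_{s = \tau_j}^{\tau_i - 1}$ is empty, hence zero, so $\E_i\big[\sum_{s = \tau_j}^{\tau_i - 1} \1_m(X_s)\big] = \E_i\big[\sum_{s = \tau_j}^{\tau_i - 1} \1_m(X_s),\, \tau_j < \tau_i\big]$.

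Next I would use that $\{\tau_j < \tau_i\}$ is $\mathcal{F}_{\tau_j}$-measurable: it is exactly the event that the stopped path $X_0, X_1, \dots, X_{\tau_j}$ never visits $i$ after time $0$. Applying the strong Markov property at $\tau_j$, and using that $X_{\tau_j} = j$ on this event,
\begin{align*}
\E_i \!\left[ \sum_{s = \tau_j}^{\tau_i - 1} \1_m(X_s),\, \tau_j < \tau_i \right]
&= \E_i \!\left[ \1_{\{\tau_j < \tau_i\}}\, \E_i\!\left[ \sum_{t = 0}^{\hat\tau_i - 1} \1_m(X_{\tau_j + t}) \;\Big|\; \mathcal{F}_{\tau_j} \right] \right] \\
&= \P_i[\tau_j < \tau_i]\, \E_j\!\left[ \sum_{s = 0}^{\tau_i - 1} \1_m(X_s) \right],
\end{align*}
where the inner conditional expectation is evaluated by noting that, for a chain started at $j \neq i$, the first return time to $i$ equals the first hitting time of $i$, so it is precisely $\tau_i$ as defined for that chain. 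This is the first identity. For the second, I would repeat the argument with $\sum_{s = \tau_j}^{\tau_i - 1} \1_m(X_s)$ replaced by $\tau_i - \tau_j = \hat\tau_i$, obtaining $\E_i[\tau_i - \tau_j,\, \tau_j < \tau_i] = \P_i[\tau_j < \tau_i]\,\E_j[\tau_i]$; alternatively, and perhaps most cleanly, the second identity follows from the first by summing over $m \in \Omega$ and using $\sum_m \1_m(X_s) \equiv 1$ together with the fact that the occupation sum is empty on $\{\tau_i < \tau_j\}$.

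The argument is essentially bookkeeping once the strong Markov property is invoked; the only point requiring genuine care is the $\mathcal{F}_{\tau_j}$-measurability of $\{\tau_j < \tau_i\}$ and the attendant ``first return time versus first hitting time'' distinction — i.e.\ making sure that after the shift the relevant time is genuinely $\tau_i$ in the sense of a chain started at $j$. I expect this to be the main (and rather mild) obstacle.
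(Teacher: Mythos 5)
Your proposal is correct and follows essentially the same route as the paper's own proof: both apply the strong Markov property at $\tau_j$ to the shifted process $(X_{\tau_j+t})_{t\ge 0}$, use that the occupation sum is empty on $\{\tau_i<\tau_j\}$ and that $\{\tau_j<\tau_i\}$ is determined by the path up to time $\tau_j$, and obtain the second identity by summing the first over $m$. No substantive differences to report.
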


\begin{proof}
\ignore{
Define the process $Y_n := X_{\tau_j + n}$.
For $i \in \Omega$, let $\tau_i^X$ and $\tau_i^Y$
denote the first return times to $i$ for $X$ and $Y$, respectively.
We have
\begin{align*}
\E_i \left [ \sum_{s = \tau_j^X}^{\tau_i^X - 1} \1_m(X_s) \right ]
&= \E_i \left [ \sum_{s = \tau_j^X}^{\tau_i^X - 1} \1_m(X_s), \tau_j^X < \tau_i^X \right ] \\
&= \sum_{n=0}^\infty \P\left  [X_n = m, \tau_j^X \leq n < \tau_i^X \mid X_0 = i \right ] \\
&= \sum_{n=0}^\infty \P \left [Y_n = m, n < \tau_i^Y, \tau_j^X< \tau_i^X \mid X_0 = i \right ] \\
&= \sum_{n=0}^\infty \P \left [Y_n = m, n < \tau_i^Y \mid Y_0 = j \right ]
\P\left [\tau_j^X< \tau_i^X \mid X_0 = i \right ] \\
&= \P_i \left [\tau_j^X < \tau_i^X \right ] \E_j \left [\sum_{s=0}^{\tau_i-1} \1_m(X_s) \right ].
\end{align*}
The second-to-last step follows from the strong Markov property.

Summing over $m$, we also have
\begin{align*}
 \P_i [\tau_j < \tau_i] \E_j[\tau_i] =\E_i [\tau_i - \tau_j , \tau_j < \tau_i]
 \leq \E_i[\tau_i].
 \end{align*}
}
For $n \geq 0$, define  $Y_n := X_{\tau_j + n}$.
For $i \in \Omega$, let $\tau_i^X$ and $\tau_i^Y$
denote the first return times to $i$ for $X$ and $Y$, respectively.
By the strong Markov property,
(a) $Y$ is a Markov process with the same transition matrix as $X$,
(b) the distribution of $Y_0$ is $e_j^\t$,
and (c) conditional on $Y_0$, $Y_n$ is independent of $X_0, X_1, \dots, X_{\tau_j}$.
Therefore,
\begin{align*}
\E \left [ \sum_{s = \tau_j^X}^{\tau_i^X - 1} \1_m(X_s) \middle \vert X_0 =i\right ]
&=
\E
\left [ \sum_{s = \tau_j^X}^{\tau_i^X - 1} \1_m(X_s), \tau_j^X < \tau_i^X \middle \vert X_0 =i\right ] \\
&= \E
\left [ \sum_{s = 0}^{\tau_i^Y - 1} \1_m(Y_s), \tau_j^X < \tau_i^X \middle \vert X_0 =i \right ] \\
&= \E \left [ \sum_{s = 0}^{\tau_i^Y - 1} \1_m(Y_s) \middle \vert Y_0 = j \right ]
\P_i \left [\tau_j^X < \tau_i^X \right ].
\end{align*}
(The second equality above follows from the definition of $Y$;
the third equality follows from the strong Markov property,
since the event $\tau_j^X < \tau_i^X$ is determined by $X_0, X_1, \dots, X_{\tau_j}$.)
This proves the first formula in the statement of the lemma;
the second formula follows on summing the first over all $m$.
\end{proof}

Using Lemma~\ref{lem: decomposition of occupation time},
we now prove our bounds on the logarithmic derivatives.
\begin{theorem}
\label{thm: bound on log deriv}
We have
\begin{equation*}
\frac{1}{2} \frac{1}{\P_i [\tau_j < \tau_i]}
\leq \max_m \abs{\frac{\partial \log \pi_m}{\partial F_{ij}}}
\leq \frac{1}{\P_i [\tau_j < \tau_i]},
\end{equation*}
and
\begin{equation*}
\max_m  \frac{\partial \log \pi_m}{\partial F_{ij}} - \min_m  \frac{\partial \log \pi_m}{\partial F_{ij}} = \frac{1}{\P_i[\tau_j<\tau_i]}.
\end{equation*}
\end{theorem}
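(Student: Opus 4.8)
The plan is to take the logarithmic derivative of the formula in Theorem~\ref{thm: derivative of invariant measure}, re-express it in terms of occupation times of the excursions of $X$ away from state $i$, split each such excursion at its first visit to $j$ by means of Lemma~\ref{lem: decomposition of occupation time}, and then read off the maximum and minimum over $m$ from the resulting expression. For the first step, note that since the extension $\bar\pi$ of Lemma~\ref{lem: inv dist is differentiable} is strictly positive, $\log\bar\pi_m$ is continuously differentiable and $\partial\log\pi_m/\partial F_{ij}=\pi_m^{-1}\,\partial\pi_m/\partial F_{ij}$. Substituting the formula of Theorem~\ref{thm: derivative of invariant measure} together with the identities $\pi_m/\pi_i=\E_i\!\left[\sum_{s=0}^{\tau_i-1}\1_m(X_s)\right]$ and $1/\pi_i=\E_i[\tau_i]$ from~\eqref{eq: relation of inv dist to mean fpt}, and writing $N_m:=\sum_{s=0}^{\tau_i-1}\1_m(X_s)$, I obtain
\[
\frac{\partial\log\pi_m}{\partial F_{ij}}=\frac{\E_j[N_m]}{\E_i[N_m]}-\frac{\E_j[\tau_i]}{\E_i[\tau_i]}.
\]

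Next I would set $p:=\P_i[\tau_j<\tau_i]$, which is positive because $F$ is irreducible (as are $\E_i[N_m]$ and $\E_i[\tau_i]$). Pathwise, under $\P_i$ the interval $[0,\tau_i)$ is the disjoint union of $[0,\tau_i\wedge\tau_j)$ and $[\tau_j,\tau_i)$, where the second piece is empty on the event $\{\tau_i<\tau_j\}$; hence
\[
\E_i[N_m]=\E_i\!\left[\sum_{s=0}^{\tau_i\wedge\tau_j-1}\1_m(X_s)\right]+\E_i\!\left[\sum_{s=\tau_j}^{\tau_i-1}\1_m(X_s)\right],
\]
and summing this identity over $m$ gives $\E_i[\tau_i]=\E_i[\tau_i\wedge\tau_j]+\E_i[\tau_i-\tau_j,\,\tau_j<\tau_i]$. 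By Lemma~\ref{lem: decomposition of occupation time}, the last term in each of these two identities equals $p\,\E_j[N_m]$ and $p\,\E_j[\tau_i]$, respectively. Solving for $\E_j[N_m]$ and $\E_j[\tau_i]$, inserting the results into the formula for $\partial\log\pi_m/\partial F_{ij}$ above, and simplifying, I arrive at
\[
\frac{\partial\log\pi_m}{\partial F_{ij}}=\frac1p\,(\bar c-c_m),
\qquad
\bar c:=\frac{\E_i[\tau_i\wedge\tau_j]}{\E_i[\tau_i]},\quad
c_m:=\frac{\E_i\!\left[\sum_{s=0}^{\tau_i\wedge\tau_j-1}\1_m(X_s)\right]}{\E_i[N_m]}.
\]

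From $1\le\tau_i\wedge\tau_j\le\tau_i$ pathwise I then read off $\bar c\in[0,1]$ and $c_m\in[0,1]$ for every $m$; moreover $c_i=1$ (since $X_s\ne i$ for $1\le s<\tau_i\wedge\tau_j$, while $\E_i[N_i]=1$) and $c_j=0$ (since $X_s\ne j$ for $0\le s<\tau_i\wedge\tau_j$). Because $\bar c$ is independent of $m$, this forces $\max_m(\bar c-c_m)=\bar c$ and $\min_m(\bar c-c_m)=\bar c-1$, so
\[
\max_m\frac{\partial\log\pi_m}{\partial F_{ij}}-\min_m\frac{\partial\log\pi_m}{\partial F_{ij}}=\frac1p,
\]
which is the second assertion of the theorem. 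The identity $\abs{\partial\log\pi_m/\partial F_{ij}}=p^{-1}\abs{\bar c-c_m}$ and the bound $\abs{\bar c-c_m}\le1$ give $\max_m\abs{\partial\log\pi_m/\partial F_{ij}}\le p^{-1}$; and since $\max_m\partial\log\pi_m/\partial F_{ij}=\bar c/p\ge0\ge(\bar c-1)/p=\min_m\partial\log\pi_m/\partial F_{ij}$, it follows that $\max_m\abs{\partial\log\pi_m/\partial F_{ij}}\ge\tfrac{1}{2}\bigl(\max_m\partial\log\pi_m/\partial F_{ij}-\min_m\partial\log\pi_m/\partial F_{ij}\bigr)=\tfrac{1}{2p}$, which is the lower bound.

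I expect the decomposition step to be the main obstacle: the pathwise splitting of the excursion must be carried out with care, in particular respecting the empty-sum convention on $\{\tau_i<\tau_j\}$, so that the two identities it produces line up exactly with the two formulas of Lemma~\ref{lem: decomposition of occupation time}. The remaining steps are a direct substitution and a couple of elementary inequalities.
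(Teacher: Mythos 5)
Your proof is correct and follows essentially the same route as the paper: both start from the formula of Theorem~\ref{thm: derivative of invariant measure}, use Lemma~\ref{lem: decomposition of occupation time} to convert the $\E_j$-quantities into excursion quantities under $\P_i$, and locate the maximum at $m=j$ and the minimum at $m=i$. Your single normalized identity $\partial\log\pi_m/\partial F_{ij}=(\bar c-c_m)/p$ with $\bar c,c_m\in[0,1]$, $c_i=1$, $c_j=0$ merely repackages the paper's two displays~\eqref{eq: log deriv formula one}--\eqref{eq: log deriv formula two}, and your explicit bookkeeping of the event $\{\tau_i<\tau_j\}$ via $\tau_i\wedge\tau_j$ is, if anything, a slightly more careful rendering of the same decomposition.
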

\begin{proof}
\ignore{
Recall
\begin{align*}
\frac{\pi_i}{\pi_m} \E_j \left [\sum_{s=0}^{\tau_i-1} \1_m(X_s) \right ]
&= \frac{1}{\P_i[\tau_j < \tau_i]}
 \frac{\pi_i}{\pi_m} \E_i \left [\sum_{s=\tau_j}^{\tau_i-1} \1_m(X_s) \right ] \\
 &= \frac{1}{\P_i[\tau_j < \tau_i]}
 - \frac{1}{\P_i[\tau_j < \tau_i]}  \frac{\pi_i}{\pi_m}
 \E_i \left [\sum_{s=0}^{\tau_j-1} \1_m(X_s) , \tau_j < \tau_i \right ] , \text{ and } \\
\pi_i \E_j[\tau_i] &=  \frac{1}{\P_i[\tau_j<\tau_i]} \pi_i
\E_i[\tau_i - \tau_j, \tau_j < \tau_i]. \\
\end{align*}
}

Using Theorem~\ref{thm: derivative of invariant measure},
Lemma~\ref{lem: decomposition of occupation time},
and~\eqref{eq: relation of inv dist to mean fpt},
we have
\begin{align}
\frac{\partial \log \pi_m}{\partial F_{ij}}
&= \frac{\pi_i}{\pi_m} \E_j \left [\sum_{s=0}^{\tau_i-1} \1_m(X_s) \right ]
-\pi_i \E_j[\tau_i]  \label{eq: log deriv formula one}\\
&= \frac{1}{\P_i[\tau_j<\tau_i]}
\frac{\pi_i}{\pi_m}
\E_i \left [\sum_{s=\tau_j}^{\tau_i-1} \1_m(X_s) \right ]
- \pi_i \E_j[\tau_i] \nonumber \\
&= \frac{1}{\P_i[\tau_j<\tau_i]}
\frac{\pi_i}{\pi_m}
\left (\E_i \left [\sum_{s=0}^{\tau_i-1} \1_m(X_s) \right ] -
\E_i \left [\sum_{s=0}^{\tau_j-1} \1_m(X_s), \tau_j < \tau_i \right ]
\right )  - \pi_i \E_j[\tau_i]
\nonumber \\
&= \frac{1}{\P_i[\tau_j<\tau_i]}
\left (
1 - \frac{\pi_i}{\pi_m}
 \E_i \left [\sum_{s=0}^{\tau_j-1} \1_m(X_s) , \tau_j < \tau_i \right ]
\right )
 - \pi_i \E_j[\tau_i]
. \label{eq: log deriv formula two}
\end{align}

We observe that the term
$\frac{\pi_i}{\pi_m} \E_j \left [\sum_{s=0}^{\tau_i-1} \1_m(X_s) \right ]$
in formula~\eqref{eq: log deriv formula one}
is nonnegative, hence that term attains a minimum value of zero when $m =i$.
Therefore, we have
\begin{align}
  \min_m  \frac{\partial \log \pi_m}{\partial F_{ij}}  &=
  -\pi_i \E_j [\tau_i],
  \label{eq: exact formula for min log deriv}
\end{align}
with the minimum attained when $m = i$,
since the other term in formula~\eqref{eq: log deriv formula one}
does not depend on $m$.
By a similar argument using~\eqref{eq: log deriv formula two},
\begin{align}
\max_m  \frac{\partial \log \pi_m}{\partial F_{ij}} &=
 \frac{1}{\P_i[\tau_j<\tau_i]} - \pi_i \E_j [\tau_i],
 \label{eq: exact formula for max log deriv}
\end{align}
and the maximum is attained when $m=j$.

Subtracting~\eqref{eq: exact formula for min log deriv}
from~\eqref{eq: exact formula for max log deriv} gives
\begin{equation*}
  \max_m  \frac{\partial \log \pi_m}{\partial F_{ij}} - \min_m  \frac{\partial \log \pi_m}{\partial F_{ij}} = \frac{1}{\P_i[\tau_j<\tau_i]},
\end{equation*}
hence
\begin{equation*}
\frac{1}{2} \frac{1}{\P_i [\tau_j < \tau_i]}
\leq \max_m \abs{\frac{\partial \log \pi_m}{\partial F_{ij}}}.
\end{equation*}
Finally, by Lemma~\ref{lem: decomposition of occupation time} and~\eqref{eq: relation of inv dist to mean fpt},
\begin{equation*}
0 \leq \pi_i \E_j [\tau_i]
= \frac{\pi_i\E_i [\tau_i - \tau_j, \tau_j < \tau_i]}{\P_i[\tau_j<\tau_i]}
\leq \frac{\pi_i \E_i [\tau_i]}{\P_i[\tau_j<\tau_i]}
= \frac{1}{\P_i[\tau_j<\tau_i]},
\end{equation*}
so
\begin{equation*}
\max_m \abs{\frac{\partial \log \pi_m}{\partial F_{ij}}}
\leq \frac{1}{\P_i [\tau_j < \tau_i]}.
\end{equation*}
\end{proof}

Corollary~\ref{cor: easy bound on log deriv} gives a simplified version of the bound in
Theorem~\ref{thm: bound on log deriv}.
This estimate can be used to derive~\cite[Theorem~1]{OC:RelErr},
which we have stated in equation~\eqref{eq: oc bound from intro} above.
We omit the proof.

\begin{corollary}\label{cor: easy bound on log deriv}
Whenever $F_{ij} \neq 0$,
\begin{align*}
\abs{\frac{\partial \log \pi_m}{\partial F_{ij}}} \leq \frac{1}{F_{ij}}.
\end{align*}
\end{corollary}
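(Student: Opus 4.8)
The plan is to read the corollary off directly from the upper bound in Theorem~\ref{thm: bound on log deriv}, which gives $\max_m \abs{\partial \log \pi_m/\partial F_{ij}} \leq 1/\P_i[\tau_j < \tau_i]$. Since the left-hand side of the corollary is at most this maximum, it suffices to establish the elementary lower bound $\P_i[\tau_j < \tau_i] \geq F_{ij}$ for $i \neq j$, which then gives $1/\P_i[\tau_j<\tau_i] \leq 1/F_{ij}$ whenever $F_{ij} \neq 0$.

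To prove that lower bound, I would simply condition on the first step of the chain started at $i$. On the event $\{X_1 = j\}$, which has probability $F_{ij}$ under $\P_i$, we have $\tau_j = 1$; and since $j \neq i$, the chain has not yet visited $i$, so $\tau_i \geq 2 > \tau_j$. Hence $\{X_1 = j\} \subseteq \{\tau_j < \tau_i\}$, and therefore $\P_i[\tau_j < \tau_i] \geq \P_i[X_1 = j] = F_{ij}$.

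Putting the pieces together, for every $m$,
\begin{equation*}
\abs{\frac{\partial \log \pi_m}{\partial F_{ij}}}
\leq \max_m \abs{\frac{\partial \log \pi_m}{\partial F_{ij}}}
\leq \frac{1}{\P_i[\tau_j < \tau_i]}
\leq \frac{1}{F_{ij}},
\end{equation*}
which is the claim. There is no genuine obstacle here; the only point requiring a moment's care is the strict inequality $\tau_j < \tau_i$ (rather than $\tau_j \leq \tau_i$) needed to match the statement of Theorem~\ref{thm: bound on log deriv}, and this is exactly why one uses $j \neq i$ to force $\tau_i \geq 2$ on $\{X_1 = j\}$.
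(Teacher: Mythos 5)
Your proposal is correct and is essentially identical to the paper's proof: both reduce the corollary to the inequality $\P_i[\tau_j < \tau_i] \geq \P_i[X_1 = j] = F_{ij}$ and then invoke Theorem~\ref{thm: bound on log deriv}. Your extra remark justifying the strict inequality on the event $\{X_1 = j\}$ is a fine (if minor) elaboration of the same argument.
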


\begin{proof}
We have
\begin{equation*}
\P_i[\tau_j < \tau_i] \geq \P_i[X_1 = j] = F_{ij},
\end{equation*}
and so the result follows by Theorem~\ref{thm: bound on log deriv}.
\end{proof}

\section{Global perturbation bounds}\label{sec: global bounds}

In this section, we use our bounds on the derivatives
of the invariant distribution to prove global perturbation estimates.
Our estimates assume that both the exact transition matrix $F$
and the perturbed matrix $\tilde{F}$ are bounded below by some
irreducible substochastic matrix $S$.
As a consequence, coefficients $\Q_{ij}(S)$ depending on $S$ arise.

We define $\Q_{ij}(S)$
in terms of a Markov chain with transition matrix depending on $S$,
and our perturbation results are based on comparisons between this chain
and other chains with transition matrices $G \geq S$.
Therefore, to avoid confusion, we let
\begin{equation*}
  \P_i[A](G)
\end{equation*}
denote the probability that $X^G \in A$ conditioned on $X^G_0 = i$
for $X^G$ a chain with transition matrix $G$.
To give a specific example,
we intend $\P_i[\tau_i < \tau_j](G)$ to mean the
probability that $X^G$ hits $j$ before returning to $i$,
conditional on $X^G_0 = i$.

We now define $\Q_{ij}(S)$.
\begin{definition}\label{def: defn of Qij, probability interpretation}
For $S$ an irreducible and substochastic or stochastic matrix,
let $X^\omega$ be the Markov chain with state space
$\Omega \cup \{\omega\}$ and transition matrix
\begin{equation*}
S^\omega := \bordermatrix{~ &\Omega &\omega \cr \Omega &S &e - Se \cr \omega &0 &1}.
\end{equation*}
We think of $X^\omega$ as a chain with transition probability $S$,
but augmented by an absorbing state $\omega$ to adjust for the fact that $S$ is substochastic.
For all $i,j \in \Omega$ with $i \neq j$, we define
\begin{equation*}
\Q_{ij}(S) := \P_i[\tau_j< \min \{\tau_i, \tau_\omega\}](S^\omega).
\end{equation*}
\end{definition}

\begin{remark}
  We observe that for $F$ stochastic,
\begin{equation*}
\Q_{ij}(F) = \P_{i}[\tau_j < \min \{\tau_i, \tau_\omega\}](F^\omega)
= \P_{i}[\tau_j < \tau_i](F),
\end{equation*}
since the absorbing state $\omega$ does not communicate with the other states $\Omega$ when $F$ is stochastic.
\end{remark}
We now show that $\Q_{ij}(S)$ is monotone as a function of $S$.
This is the crucial step in deriving global perturbation bounds from
the bounds on derivatives in Theorem~\ref{thm: bound on log deriv}.

\begin{lemma}\label{lem: monotonicity}
Let $S$ be an irreducible substochastic matrix.
If $F$ is a stochastic or substochastic matrix with $F \geq S$,
then
\begin{equation*}
\P_i[\tau_j < \tau_i](F^\omega) = \Q_{ij}(F) \geq \Q_{ij}(S) > 0.
\end{equation*}
In addition, for any substochastic or stochastic matrix $S$,
\begin{equation*}
  \Q_{ij}(S) \geq S_{ij}.
\end{equation*}
\end{lemma}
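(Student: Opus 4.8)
The plan is to prove the two assertions separately, both by coupling-type arguments on the augmented chains. For the monotonicity statement, I would construct the chains $X^{S,\omega}$ and $X^{F,\omega}$ with transition matrices $S^\omega$ and $F^\omega$ on a common probability space, coupled so that whenever $X^{S,\omega}$ takes a step into $\Omega$, $X^{F,\omega}$ takes the same step, while $X^{S,\omega}$ may step to $\omega$ exactly when $X^{F,\omega}$ either steps to $\omega$ or takes one of the extra transitions available because $F \geq S$. Concretely, for each state $k \in \Omega$ one realizes the step of $X^{S,\omega}$ using a uniform random variable partitioned according to the row $S^\omega_{k,\cdot}$, and the step of $X^{F,\omega}$ using the same uniform partitioned according to $F^\omega_{k,\cdot}$, with the partition refined so that the sub-interval assigned to each $\ell \in \Omega$ for $X^S$ sits inside the sub-interval assigned to $\ell$ for $X^F$. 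Then, started from the same state $i$, the $X^F$ chain visits exactly the states the $X^S$ chain visits, in the same order, possibly staying in $\Omega$ longer. Hence on the event $\{\tau_j < \min\{\tau_i,\tau_\omega\}\}$ for $X^{S,\omega}$, the same event holds for $X^{F,\omega}$, so $\Q_{ij}(F) = \P_i[\tau_j < \min\{\tau_i,\tau_\omega\}](F^\omega) \geq \P_i[\tau_j < \min\{\tau_i,\tau_\omega\}](S^\omega) = \Q_{ij}(S)$. The equality $\P_i[\tau_j<\tau_i](F^\omega) = \Q_{ij}(F)$ is just the Remark preceding the lemma when $F$ is stochastic, and when $F$ is substochastic it is the definition. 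Positivity $\Q_{ij}(S) > 0$ follows since $S$ is irreducible: there is a path $i = k_0, k_1, \dots, k_r = j$ in $\Omega$ with all $S_{k_{l}k_{l+1}} > 0$ and avoiding $i$ after the start, which one may take to have no repeated states, so this path has positive probability under $S^\omega$ and witnesses $\tau_j < \min\{\tau_i,\tau_\omega\}$.

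For the second assertion, $\Q_{ij}(S) \geq S_{ij}$, the point is simply that taking a single step directly from $i$ to $j$ is one way to realize the event $\{\tau_j < \min\{\tau_i,\tau_\omega\}\}$:
\[
\Q_{ij}(S) = \P_i[\tau_j < \min\{\tau_i,\tau_\omega\}](S^\omega) \geq \P_i[X^\omega_1 = j](S^\omega) = S^\omega_{ij} = S_{ij},
\]
using $i \neq j$ so that a first step to $j$ is not a return to $i$, and $j \neq \omega$ so it is not absorption at $\omega$. This requires no irreducibility hypothesis, which is why the lemma states it for arbitrary substochastic or stochastic $S$.

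I expect the main obstacle to be making the coupling in the first part fully rigorous — in particular, verifying the strong Markov property applies and that the nested-interval construction genuinely yields two chains with the correct marginal laws that agree on all steps into $\Omega$. One clean way to sidestep an explicit pathwise coupling is to argue directly with hitting probabilities: one can show that $q^F_k := \P_k[\tau_j < \min\{\tau_i,\tau_\omega\}](F^\omega)$, viewed as a function of $k \in \Omega \setminus \{i,j\}$ together with boundary values $q^F_j = 1$, $q^F_i = q^F_\omega = 0$, is the minimal nonnegative solution of a linear system $q = F q$ on the transient states (cf.\ the characterization of hitting probabilities in~\cite{Nor:MarkovBook}), and then use that enlarging the off-diagonal mass of $F$ while keeping $F \geq S$ can only increase this minimal solution, by an induction on the series expansion $\sum_s F_i^s$ as in~\eqref{eq: irred and stochastic implies convergent}. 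Either route works; the coupling is more transparent, the linear-algebra route more self-contained given what the paper has already set up.
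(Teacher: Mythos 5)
Your proof is correct, but the mechanism you use for the monotonicity step differs from the paper's. Where you build a pathwise coupling of the two augmented chains (nested sub-intervals of a common uniform variable, so that on the event $\{\tau_j < \min\{\tau_i,\tau_\omega\}\}$ for the $S$-chain the two trajectories agree up to time $\tau_j$), the paper instead decomposes the event over the set $\mathscr{P}^M_{ij}$ of walks from $i$ to $j$ that avoid $i$ and $j$ in between, writes
\begin{equation*}
\P_i[\tau_j < \tau_i](F^\omega) \;=\; \sum_{M=1}^\infty \sum_{\gamma \in \mathscr{P}^M_{ij}} \prod_{k=0}^{M-1} F_{\gamma(k)\gamma(k+1)},
\end{equation*}
and observes that each summand is a product of entries and hence monotone under $F \geq S$. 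The two arguments prove the same inequality; the path-sum is more elementary and entirely self-contained (no joint probability space, no appeal to the strong Markov property for the coupled pair), which is precisely the technical overhead you flag as the main obstacle in your version. Your coupling does go through --- since the $S$-sets can be chosen as measurable subsets of the corresponding $F$-intervals, the marginals are correct and divergence of the two chains can only occur by the $S$-chain jumping to $\omega$, which already kills its event --- but if you want to avoid formalizing it, note that your coupling restricted to the relevant event is exactly the paper's path-by-path comparison in disguise. Your treatment of positivity (a self-avoiding path furnished by irreducibility) and of $\Q_{ij}(S) \geq S_{ij}$ (the one-step path) matches the paper.
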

\begin{proof}
Let $\mathscr{P}^M_{ij}$ be the set of all \rev{walks} of length $M$
in $\Omega$ which start at $i$, end at $j$,
and visit $i$ and $j$ only at the endpoints.
To be more precise, define
\begin{align*}
\mathscr{P}^M_{ij}&
:=  \\ &\{\gamma:\mathbb{Z}\cap[0,M] \rightarrow \Omega:
\gamma(0) = i, \gamma(M) = j, \gamma(k) \notin \{i,j\} \forall 0<k<M\}.
\end{align*}
We observe that since $F \geq S$,
\begin{align*}
\P_i[\tau_j < \tau_i](F^\omega)
&= \sum_{M=1}^\infty \sum_{\gamma \in \mathscr{P}^M_{ij}} \P_i [X_k = \gamma(k) \mbox{ for } k = 1, \dots, M](F^\omega) \\
&\geq \sum_{M=1}^\infty \sum_{\gamma \in \mathscr{P}^M_{ij}} \P_i [X_k = \gamma(k) \mbox{ for } k = 1, \dots, M](S^\omega)\\
&=\P_i[\tau_j< \min\{\tau_i, \tau_\omega\}](S^\omega) \\
&= \Q_{ij}(S).
\end{align*}
Since $S$ is irreducible, we also have that
\begin{equation*}
  \Q_{ij}(S) = \P_i[\tau_j<\min\{\tau_i, \tau_\omega\}](S^\omega) > 0.
\end{equation*}
Finally,
\begin{equation*}
  \Q_{ij}(S) = \P_i[\tau_j<\min\{\tau_i, \tau_\omega\}](S^\omega) \geq \P_i[X_1=j](S^\omega)
 = S_{ij},
\end{equation*}
which concludes the proof.
\end{proof}

Combining Theorem~\ref{thm: bound on log deriv} with Lemma~\ref{lem: monotonicity}
yields our global perturbation estimate.

\begin{theorem}\label{thm: global bound diff sparsity}
Let $F,\tilde{F}$ be stochastic matrices, let $S$ be substochastic and irreducible,
and assume that $F, \tilde{F} \geq S$.
We have
\begin{equation*}
  \lvert \log \pi_m(\tilde{F}) - \log \pi_m(F) \rvert
\leq \sum_{\substack{i,j \in \Omega \\ i \neq j}}
 \left \lvert \log \left (\tilde{F}_{ij} - S_{ij} + \Q_{ij}(S) \right)
- \log(F_{ij}- S_{ij} + \Q_{ij}(S)) \right \rvert.
\end{equation*}
\end{theorem}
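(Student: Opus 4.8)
The plan is to integrate the logarithmic-derivative bound from Theorem~\ref{thm: bound on log deriv} along a path connecting $F$ to $\tilde{F}$, parametrized so that the entry-wise lower bound $\geq S$ is preserved throughout. Since $F$ and $\tilde{F}$ agree on diagonal entries only up to the off-diagonal choices, I will view both as functions of their off-diagonal entries, using the coordinates introduced before~\eqref{eq: defn of partial derivatives}. The natural path is to change the off-diagonal entries one pair $(i,j)$ at a time, or—more cleanly—to use the straight-line path $G(t) = (1-t)F + t\tilde{F}$, which is stochastic for all $t \in [0,1]$ (stochastic matrices form a convex set) and satisfies $G(t) \geq S$ for all $t$ since $F, \tilde{F} \geq S$ and $S$ is fixed. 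Along this path, each off-diagonal coordinate moves affinely: $G_{ij}(t) = (1-t)F_{ij} + t\tilde{F}_{ij}$, with $\dot{G}_{ij}(t) = \tilde{F}_{ij} - F_{ij}$.

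The key computation is then the fundamental theorem of calculus applied to $t \mapsto \log \pi_m(G(t))$. By the chain rule and the definition of the partial derivatives in~\eqref{eq: defn of partial derivatives},
\begin{equation*}
\log \pi_m(\tilde{F}) - \log \pi_m(F) = \int_0^1 \sum_{\substack{i,j \in \Omega \\ i \neq j}} \frac{\partial \log \pi_m}{\partial F_{ij}}(G(t)) \, (\tilde{F}_{ij} - F_{ij}) \, \d t.
\end{equation*}
(Here I should be a little careful: the chain rule requires differentiability of the extension $\bar\pi$, which Lemma~\ref{lem: inv dist is differentiable} supplies, and I should note that $G(t)$ stays in the neighborhood $\mathcal{V}$; this is fine since $G(t)$ is genuinely stochastic and irreducible — irreducibility because $G(t) \geq S$ and $S$ is irreducible.) Taking absolute values and applying Theorem~\ref{thm: bound on log deriv} together with the monotonicity Lemma~\ref{lem: monotonicity}, I get, for each $t$ and each pair $(i,j)$, that $\bigl|\frac{\partial \log \pi_m}{\partial F_{ij}}(G(t))\bigr| \leq \P_i[\tau_j < \tau_i](G(t)^\omega)^{-1} = \Q_{ij}(G(t))^{-1} \leq \Q_{ij}(S)^{-1}$. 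However, the subtle point is that this last chain of inequalities gives the \emph{corollary} bound~\eqref{eq: intro corollary bound}, not the sharper logarithmic form in the theorem statement. To recover the logarithmic form I should instead bound $\bigl|\frac{\partial \log \pi_m}{\partial F_{ij}}(G(t))\bigr|$ by $\P_i[\tau_j < \tau_i](G(t)^\omega)^{-1}$ and then observe that $\P_i[\tau_j < \tau_i](G(t)^\omega) = \Q_{ij}(G(t)) \geq G_{ij}(t) - S_{ij} + \Q_{ij}(S)$; this refined lower bound should follow from the same walk-counting argument as in Lemma~\ref{lem: monotonicity}, separating the single-step walk $i \to j$ (which contributes $G_{ij}(t)$) from the remaining walks (which contribute $\geq \Q_{ij}(S) - S_{ij}$, i.e. all $S$-walks except the single step). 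With this in hand,
\begin{equation*}
\Bigl|\frac{\partial \log \pi_m}{\partial F_{ij}}(G(t))\Bigr| \cdot |\tilde{F}_{ij} - F_{ij}| \leq \frac{|\tilde{F}_{ij} - F_{ij}|}{G_{ij}(t) - S_{ij} + \Q_{ij}(S)} = \Bigl| \frac{\d}{\d t} \log\bigl(G_{ij}(t) - S_{ij} + \Q_{ij}(S)\bigr) \Bigr|,
\end{equation*}
since $G_{ij}(t)$ is affine in $t$ and $G_{ij}(t) - S_{ij} + \Q_{ij}(S) > 0$.

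Substituting this into the integral and using $|\int f| \leq \int |f|$, then the triangle inequality to pull the sum outside, gives
\begin{equation*}
|\log \pi_m(\tilde{F}) - \log \pi_m(F)| \leq \sum_{i \neq j} \int_0^1 \Bigl| \frac{\d}{\d t} \log\bigl(G_{ij}(t) - S_{ij} + \Q_{ij}(S)\bigr) \Bigr| \, \d t.
\end{equation*}
Finally, because $t \mapsto G_{ij}(t) - S_{ij} + \Q_{ij}(S)$ is affine (hence monotone) in $t$, the inner integral of the absolute value of its logarithmic derivative equals exactly $|\log(\tilde{F}_{ij} - S_{ij} + \Q_{ij}(S)) - \log(F_{ij} - S_{ij} + \Q_{ij}(S))|$ — the total variation of a monotone function is the difference of endpoint values — which is the claimed bound. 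The main obstacle I anticipate is the refined lower bound $\Q_{ij}(G(t)) \geq G_{ij}(t) - S_{ij} + \Q_{ij}(S)$: one must carefully set up the walk decomposition so that the single-step term is extracted with coefficient $G_{ij}(t)$ exactly while the residual is correctly lower-bounded by the corresponding $S^\omega$-probability minus its single-step term, and one must confirm no double-counting or sign issues arise when $\tilde{F}_{ij} - F_{ij}$ changes sign across different pairs $(i,j)$ (handled by taking absolute values pairwise before summing). A secondary technical point is justifying differentiation under the integral / the chain rule on the boundary of the stochastic polytope, but Lemma~\ref{lem: inv dist is differentiable} and the irreducibility of $G(t)$ make this routine.
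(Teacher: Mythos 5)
Your proposal is correct and follows essentially the same route as the paper: integrate the logarithmic-derivative bound of Theorem~\ref{thm: bound on log deriv} along the segment $G(t)=(1-t)F+t\tilde F$, and sharpen Lemma~\ref{lem: monotonicity} to $\P_i[\tau_j<\tau_i](G(t))\geq G_{ij}(t)-S_{ij}+\Q_{ij}(S)$ by splitting off the one-step walk, exactly as the paper does. The "main obstacle" you flag is handled in the paper by the identity $\P_i[\tau_j<\tau_i](G)=G_{ij}+\P_i[1<\tau_j<\tau_i](G)$ followed by the walk-counting comparison with $S^\omega$, which is precisely the decomposition you describe.
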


\rev{
\begin{remark}
We allow $S$ to be stochastic in Definition~\ref{def: defn of Qij, probability interpretation}
and also in the hypotheses of Theorem~\ref{thm: global bound diff sparsity}.
However, we observe that if $S$ is stochastic,
then the conclusion of the theorem is trivial since $S$ is the unique stochastic $F$
with $F \geq S$.
\end{remark}}

\begin{proof}
Let $G \in \{t\tilde{F} + (1-t) F: t \in [0,1] \}$.
Since $\Q_{ij}(S) = \P_i[\tau_j < \min\{\tau_i, \tau_\omega\}](S^\omega)$, we have
\begin{align*}
\P_i[\tau_j < \tau_i](G) &= G_{ij}+\P_i[1<\tau_j<\tau_i](G) \\
&\geq G_{ij} + \P_i[1<\tau_j<\min\{\tau_i, \tau_\omega\}](S^\omega) \\
&= G_{ij} + \P_i[\tau_j<\min\{\tau_i, \tau_\omega\}](S^\omega)- S_{ij} \\
&= G_{ij} + \Q_{ij}(S) - S_{ij}.
\end{align*}
Therefore,
\begin{align*}
   \lvert \log \pi_m(\tilde{F}) - \log \pi_m(F) \rvert
   &=
   \left \lvert  \sum_{i \neq j} \int_0^1 \frac{\partial \log \pi_m}{\partial F_{ij}} \left (t\tilde{F} + (1-t) F \right )
   \left (\tilde{F}_{ij} - F_{ij} \right) \, dt\right \rvert \\
   &\leq \sum_{i \neq j} \int_0^1 \frac{\left \lvert \tilde{F}_{ij} - F_{ij} \right \rvert}
   {\P_i[\tau_j < \tau_i]\left (t\tilde{F} + (1-t) F \right)} \, dt \\
   &\leq \sum_{i \neq j}
   \int_0^1 \frac{\left \lvert \tilde{F}_{ij} - F_{ij} \right \rvert}{t \tilde{F}_{ij} + (1-t) F_{ij} + \Q_{ij}(S) - S_{ij} } \, dt \\
   &=
   \sum_{i \neq j} \left \lvert \log \left (\tilde{F}_{ij}  - S_{ij} + \Q_{ij}(S) \right) - \log(F_{ij}- S_{ij}  + \Q_{ij}(S) ) \right \rvert.
\end{align*}
The first equality holds since by Lemma~\ref{lem: inv dist is differentiable},
the invariant distribution $\pi$ is Fr\'echet differentiable on an open neighborhood
of the set of irreducible, stochastic matrices.
Directional derivatives can then be computed using partial derivatives as above.
The denominator in the third line is positive since
$F,\tilde{F} \geq S$ implies
\begin{equation*}
t\tilde{F}_{ij} + (1-t) F_{ij} - S_{ij} \geq 0,
\end{equation*}
and by Lemma~\ref{lem: monotonicity}, $\Q_{ij}(S) > 0$.
\end{proof}

Theorem~\ref{thm: global bound diff sparsity}
takes a somewhat complicated form,
so in Corollary~\ref{cor: simpler version of global bound},
we present a simplified version.
The proof of Corollary~\ref{cor: simpler version of global bound}
shows that the bound in Theorem~\ref{thm: global bound diff sparsity}
is always smaller than the bound in Corollary~\ref{cor: simpler version of global bound}.

\begin{corollary}\label{cor: simpler version of global bound}
Let $F,\tilde{F}$ be stochastic matrices, let $S$ be substochastic and irreducible,
and assume that $F, \tilde{F} \geq S$.
We have
\begin{equation*}
  \lvert \log \pi_m(\tilde{F}) - \log \pi_m(F) \rvert
\leq \sum_{\substack{i,j \in \Omega \\ i \neq j}}
 \frac{\left \lvert\tilde{F}_{ij} - F_{ij}\right \rvert}{\Q_{ij}(S)} .
\end{equation*}
\end{corollary}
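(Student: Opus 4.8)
The plan is to derive Corollary~\ref{cor: simpler version of global bound} from Theorem~\ref{thm: global bound diff sparsity} by showing termwise that each summand on the right-hand side of the corollary dominates the corresponding summand in the theorem. Fix a pair $i \neq j$, and for brevity write $a := F_{ij} - S_{ij} + \Q_{ij}(S)$ and $b := \tilde{F}_{ij} - S_{ij} + \Q_{ij}(S)$. By Lemma~\ref{lem: monotonicity} we have $\Q_{ij}(S) \geq S_{ij}$, so $\Q_{ij}(S) - S_{ij} \geq 0$, and since $F_{ij}, \tilde{F}_{ij} \geq S_{ij}$ it follows that $a, b \geq \Q_{ij}(S) > 0$. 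The theorem's $ij$-summand is $\lvert \log b - \log a \rvert$, and I want to bound it by $\Q_{ij}(S)^{-1} \lvert \tilde{F}_{ij} - F_{ij} \rvert = \Q_{ij}(S)^{-1} \lvert b - a \rvert$.

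This is just the elementary fact that, for $x, y$ both at least $c > 0$, one has $\lvert \log y - \log x \rvert \leq c^{-1} \lvert y - x \rvert$. I would prove it by the mean value theorem applied to $\log$ on the interval between $a$ and $b$: the derivative is $1/\xi$ for some $\xi$ between $a$ and $b$, hence $\xi \geq \min\{a, b\} \geq \Q_{ij}(S)$, giving $\lvert \log b - \log a \rvert = \xi^{-1} \lvert b - a \rvert \leq \Q_{ij}(S)^{-1} \lvert b - a \rvert$. Equivalently, and perhaps cleaner to present, one can write $\lvert \log b - \log a \rvert = \left\lvert \int_a^b t^{-1} \, dt \right\rvert \leq \Q_{ij}(S)^{-1} \lvert b - a \rvert$, since $t^{-1} \leq \Q_{ij}(S)^{-1}$ throughout the interval of integration. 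Summing this termwise inequality over all pairs $i \neq j$ and combining with Theorem~\ref{thm: global bound diff sparsity} yields the corollary, and the intermediate step also makes transparent the claim in the surrounding text that the bound in Theorem~\ref{thm: global bound diff sparsity} is always at least as sharp as the one here.

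There is essentially no obstacle here: the only things being used are the monotonicity statement $\Q_{ij}(S) \geq S_{ij}$ from Lemma~\ref{lem: monotonicity}, the hypothesis $F, \tilde{F} \geq S$, and a one-line calculus estimate. Alternatively, one could bypass Theorem~\ref{thm: global bound diff sparsity} entirely and argue directly from Theorem~\ref{thm: bound on log deriv} and Lemma~\ref{lem: monotonicity}, integrating $\partial \log \pi_m / \partial F_{ij}$ along the segment $t \tilde{F} + (1-t)F$: since each matrix on this segment is $\geq S$, Lemma~\ref{lem: monotonicity} gives $\P_i[\tau_j < \tau_i](t\tilde F + (1-t)F) \geq \Q_{ij}(t\tilde F + (1-t)F) \geq \Q_{ij}(S)$, so Theorem~\ref{thm: bound on log deriv} bounds the integrand in absolute value by $\Q_{ij}(S)^{-1}$, and integrating against $\lvert \tilde F_{ij} - F_{ij} \rvert$ and summing over $i \neq j$ finishes it. Either route is short; I would present the first since it also substantiates the comparison claim made just before the corollary.
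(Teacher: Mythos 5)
Your proposal is correct and matches the paper's own argument: the paper likewise bounds each summand of Theorem~\ref{thm: global bound diff sparsity} termwise by $\lvert \tilde{F}_{ij}-F_{ij}\rvert/\Q_{ij}(S)$, using that both arguments of the logarithm lie in $[\Q_{ij}(S),\infty)$ so the derivative of $\log$ there is at most $\Q_{ij}(S)^{-1}$. The mean-value/integral estimate and the observation that this also substantiates the comparison claim preceding the corollary are exactly as in the paper.
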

\begin{proof}
We have
\begin{align*}
 &\left  \lvert \log \left (\tilde{F}_{ij} + \Q_{ij}(S) - S_{ij} \right)
- \log(F_{ij} + \Q_{ij}(S)- S_{ij}) \right \rvert \\
&\qquad\qquad\leq \max_{x \in [\Q_{ij}(S), \infty]} \frac{d \log}{dx}(x)
\left \lvert (\tilde{F}_{ij} - S_{ij}) - (F_{ij} - S_{ij}) \right \rvert\\
&\qquad\qquad= \frac{\left \lvert\tilde{F}_{ij} - F_{ij}\right \rvert}{\Q_{ij}(S)},
\end{align*}
and so the result follows directly from Theorem~\ref{thm: global bound diff sparsity}.
\end{proof}

We show in Theorem~\ref{thm: sharpness} that
both Theorem~\ref{thm: global bound diff sparsity} and
Corollary~\ref{cor: simpler version of global bound} are sharp.
That is, we show that $\rho_{ij}(S) = \Q_{ij}(S)$
is within a factor of two of the largest value of
$\rho_{ij}(S)$ so that a bound of the form
\begin{equation*}
\lvert \log \pi_m(\tilde{F}) - \log \pi_m(F) \rvert
\leq \sum_{\substack{i,j \in \Omega \\ i \neq j}}
\left \lvert
\log(\tilde{F}_{ij} - S_{ij} + \rho_{ij}(S)) - \log( F_{ij} - S_{ij}+ \rho_{ij}(S))
\right \rvert,
\end{equation*}
and we show that $\eta_{ij}(S) = \Q_{ij}(S)^{-1}$
is within a factor of two of the smallest value of $\eta_{ij}(S)$
such that a bound of the form
\begin{equation*}
  \lvert \log \pi_m(\tilde{F}) - \log \pi_m(F) \rvert
\leq \sum_{\substack{i,j \in \Omega \\ i \neq j}}
\eta_{ij}(S) \left \lvert\tilde{F}_{ij} - F_{ij}\right \rvert
\end{equation*}
holds.

\rev{
\begin{remark}\label{rem: comment on sharpness}
We note that some authors call a bound sharp if
it is possible for equality to hold.
For example, a bound of form~\eqref{eq: form single condition number}
may be called sharp if for every stochastic $F$,
there exists a stochastic $\tilde{F}$ so that equality holds.
We prefer to call a bound sharp if it is the best bound of a given form,
possibly up to a small constant factor.
\end{remark}}

\begin{theorem}\label{thm: sharpness}
Let $S$ be an \rev{irreducible}, substochastic matrix,
and let $i,j \in \Omega$ with $i \neq j$.
For every $\eps >0$, there exist stochastic matrices $\tilde{F},F$
with $\tilde{F},F \geq S$ so that
\begin{align*}
\max_{m \in \Omega}
&\left \lvert \log \pi_m \left (\tilde{F} \right ) - \log \pi_m(F) \right \rvert \\
&\qquad\geq
\frac{1}{2} (\Q_{ij}(S)+ \eps)^{-1} \left \lvert \tilde{F}_{ij} - F_{ij} \right \rvert
\\
&\qquad\geq
\left \lvert \log(\tilde{F}_{ij} - S_{ij} + 2(\Q_{ij}(S) + \eps) )
- \log(F_{ij} - S_{ij} + 2(\Q_{ij}(S) + \eps) )\right \rvert
\end{align*}
and $\left \lvert \tilde{F}_{kl} - F_{kl} \right \rvert = 0$
for all $(k,l)$ except $(i,j)$ and $(i,i)$.
\end{theorem}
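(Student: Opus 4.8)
The plan is to split the statement into its two inequalities. The second one is pure calculus: writing $c := \Q_{ij}(S)+\eps>0$ and $a := \tilde F_{ij}-S_{ij}\ge 0$, $b := F_{ij}-S_{ij}\ge 0$ (nonnegative because $\tilde F,F\ge S$), the mean value theorem applied to $\log$ gives $\abs{\log(a+2c)-\log(b+2c)}\le \tfrac1{2c}\abs{a-b}$, since $(\log x)'=1/x\le 1/(2c)$ for $x\ge 2c$ and $a+2c,b+2c\ge 2c$; this is exactly the second inequality. So the real content is the first one. My strategy is to build $F$ and $\tilde F$ agreeing in every entry except $(i,j)$ and $(i,i)$, connect them by the segment $G_t := t\tilde F+(1-t)F$, $t\in[0,1]$, and combine the fundamental theorem of calculus along this segment with the exact identity furnished by Theorem~\ref{thm: bound on log deriv} (whose proof identifies the maximizer over $m$ as $m=j$ and the minimizer as $m=i$),
\begin{equation*}
\frac{\partial\log\pi_j}{\partial F_{ij}}(G) - \frac{\partial\log\pi_i}{\partial F_{ij}}(G) = \frac{1}{\P_i[\tau_j<\tau_i](G)}
\end{equation*}
for every irreducible stochastic $G$. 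To land the constant $(\Q_{ij}(S)+\eps)^{-1}$ I will need $\P_i[\tau_j<\tau_i](G_t)\le \Q_{ij}(S)+\eps$ for all $t$.

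The construction I have in mind: write $r_k := 1-\sum_l S_{kl}\ge 0$ for the substochastic defect of row $k$. If $r_i=0$, then any stochastic matrix $\ge S$ has row $i$ equal to row $i$ of $S$, so admissible $F,\tilde F$ (which must also agree off row $i$, and off columns $i,j$ within row $i$) must coincide; taking $F=\tilde F$ makes all three displayed quantities vanish and we are done. So assume $r_i>0$. Define the common rows $k\neq i$ of $F$ and $\tilde F$ by $F_{kl}:=S_{kl}$ for $l\neq i$ and $F_{ki}:=S_{ki}+r_k$, i.e. route the missing mass of each such row back to state $i$. In row $i$, set $F_{il}:=S_{il}$ for $l\notin\{i,j\}$, let $F_{ij}:=S_{ij}$ and $\tilde F_{ij}:=S_{ij}+\mu$ with $\mu:=\min(\eps,r_i)>0$, and fix $F_{ii},\tilde F_{ii}$ so the rows sum to $1$. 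A direct check shows $F,\tilde F$ are stochastic, $F,\tilde F\ge S$ (hence irreducible, $S$ being so), they agree except in $(i,j)$ and $(i,i)$, and $\abs{\tilde F_{ij}-F_{ij}}=\mu$.

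The crux is the following computation. Let $F^x$ be the matrix just described but with $(i,j)$-entry $x\in[S_{ij},\,S_{ij}+r_i]$, so $F=F^{S_{ij}}$, $\tilde F=F^{S_{ij}+\mu}$, and each $G_t=F^{S_{ij}+t\mu}$. I claim $\P_i[\tau_j<\tau_i](F^x) = x - S_{ij} + \Q_{ij}(S)$. Conditioning on the first step from $i$,
\begin{equation*}
\P_i[\tau_j<\tau_i](F^x) = x + \sum_{l\neq i,j} S_{il}\,\P_l[\tau_j<\tau_i](F^x),
\end{equation*}
and each $\P_l[\tau_j<\tau_i](F^x)$ is determined by the rows of $F^x$ indexed by $\Omega\setminus\{i,j\}$ alone (the chain is stopped on reaching $i$ or $j$), hence is independent of $x$; so $\P_i[\tau_j<\tau_i](F^x)=x+C$ with $C$ not depending on $x$. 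At $x=S_{ij}$ the matrix $F^{S_{ij}}$ agrees with $S$ in every column except column $i$, and walks that hit $j$ before returning to $i$ never land on $i$ (except at time $0$); expanding $\P_i[\tau_j<\tau_i](F^{S_{ij}})$ over such first-passage walks exactly as in the proof of Lemma~\ref{lem: monotonicity} therefore reproduces $\P_i[\tau_j<\min\{\tau_i,\tau_\omega\}](S^\omega)=\Q_{ij}(S)$ term by term. Thus $C=\Q_{ij}(S)-S_{ij}$, proving the claim, and in particular $\P_i[\tau_j<\tau_i](G_t) = \Q_{ij}(S)+t\mu \le \Q_{ij}(S)+\eps$ for all $t\in[0,1]$.

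To conclude, write $D_m := \log\pi_m(\tilde F)-\log\pi_m(F)$. Along $G_t$ the only off-diagonal entry that varies is $F_{ij}$, so, exactly as in the proof of Theorem~\ref{thm: global bound diff sparsity}, $\frac{d}{dt}\log\pi_m(G_t)=\frac{\partial\log\pi_m}{\partial F_{ij}}(G_t)\,(\tilde F_{ij}-F_{ij})$; integrating the difference of the $m=j$ and $m=i$ relations and using the exact identity above,
\begin{align*}
D_j - D_i &= \mu\int_0^1 \frac{\,dt}{\P_i[\tau_j<\tau_i](G_t)} \\
&\ge \frac{\mu}{\Q_{ij}(S)+\eps} = \frac{\abs{\tilde F_{ij}-F_{ij}}}{\Q_{ij}(S)+\eps},
\end{align*}
and finally $2\max_m\abs{D_m}\ge \abs{D_j-D_i}$ gives the first inequality. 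I expect the crux computation to be the main obstacle: the insight that routing the substochastic defect of $S$ back into state $i$ makes "returning early to $i$ under $F^x$" play the role of "absorption at $\omega$ under $S^\omega$", so that $\P_i[\tau_j<\tau_i](F^x)$ equals $\Q_{ij}(S)$ shifted by precisely the mass $x-S_{ij}$ we deliberately add — while a whole interval $[S_{ij},S_{ij}+r_i]$ of admissible values of $F_{ij}$ survives, leaving room for a genuine perturbation.
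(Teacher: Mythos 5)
Your proposal is correct and follows essentially the same route as the paper: the same matrix $F$ (substochastic deficiency of every row routed back into column $i$, so that $\P_i[\tau_j<\tau_i](F)=\Q_{ij}(S)$), the same degenerate case when row $i$ of $S$ already sums to one, and the same appeal to Theorem~\ref{thm: bound on log deriv} via the identity $\max_m-\min_m=\P_i[\tau_j<\tau_i]^{-1}$ attained at $m=j$ and $m=i$. The only difference is in the last step: the paper takes the perturbation size $\eta$ small enough that the difference quotient approximates the derivative, whereas you integrate the exact identity along the segment using the explicit formula $\P_i[\tau_j<\tau_i](F^x)=x-S_{ij}+\Q_{ij}(S)$, which yields a concrete admissible perturbation size $\mu=\min(\eps,r_i)$; both finishes are valid.
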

\begin{proof}
Define
\begin{equation*}
  F_{kl} :=
\begin{cases}
  S_{kl} &\text{ if } l \neq i, \text{ and } \\
  1 - \sum_{m \neq i} S_{km} &\text{ if } l = i.
\end{cases}
\end{equation*}
$F$ is stochastic, $F \geq S$, $F_j - F_j e_i e_i^\t = S_j - S_j e_i e_i^\t$, and $F_{j^\bot, j} = S_{j^\bot, j}$.
Therefore,
\begin{equation*}
\Q_{ij}(S) = \Q_{ij}(F) = \P_i[\tau_j < \tau_i](F).
\end{equation*}

We now distinguish two cases: $\sum_{m \in \Omega} S_{im} = 1$ and $\sum_{m \in \Omega} S_{im} < 1$.
In the first case, if $F \geq S$ and $F$ is stochastic, then $F_{im} = S_{im}$ for all $m \in \Omega$.
Therefore, $\left \lvert \tilde{F}_{ij} - F_{ij} \right \rvert =0$ for all stochastic $F,\tilde{F} \geq S$,
 and so the conclusion of the theorem holds.
In the second case, we observe that $\sum_{m \in \Omega} S_{im} < 1$ implies $F_{ii} > S_{ii} \geq 0$.
Therefore, for any sufficiently small $\eta >0$,
\begin{equation*}
F^\eta := F + \eta(e_i e_j^\t - e_ie_i^\t)
\end{equation*}
is stochastic, and $F^\eta \geq S$.
By Theorem~\ref{thm: bound on log deriv}, we have
\begin{equation*}
\max_{m \in \Omega} \abs{\frac{\partial \log \pi_m}{\partial F_{ij}}(F)}
\geq \frac{1}{2} \frac{1}{\P_i [\tau_j < \tau_i]}
=\frac{1}{2}\frac{1}{\Q_{ij}(S)}.
\end{equation*}
It follows that for every $\eps >0$ there exists an $\eta >0$ with
\begin{align*}
&\max_{m \in \Omega} \left \lvert \log \pi_m \left (F^\eta \right ) - \log \pi_m(F) \right \rvert \\
&\quad\geq
\frac{1}{2} (\Q_{ij}(S) + \eps)^{-1} \left \lvert F^\eta_{ij} - F_{ij} \right \rvert \\
&\quad\geq
\left \lvert \log(F^\eta_{ij} - S_{ij}+2(\Q_{ij}(S)+\eps))
- \log(F_{ij}-S_{ij}+2(\Q_{ij}(S) + \eps))\right \rvert.
\end{align*}
(The second inequality follows by an argument
similar to the proof of Corollary~\ref{cor: simpler version of global bound}.)
\end{proof}

Theorem~\ref{thm: global bound diff sparsity} and Corollary~\ref{cor: simpler version of global bound}
take unusual forms,
and at first glance the condition $F,\tilde{F} \geq S$ may seem inconvenient.
In Remarks~\ref{rem: monotonicity and choice of S}
and~\ref{rem: error estimate when true F is unknown},
we explain how to apply these estimates.

\rev{
\begin{remark}\label{rem: monotonicity and choice of S}
For a given application, the best upper bounds are obtained by choosing the largest possible $S$.
This is a consequence of Lemma~\ref{lem: monotonicity}.
We apply this principle in Remark~\ref{rem: error estimate when true F is unknown}.
\end{remark}}

\begin{remark}\label{rem: error estimate when true F is unknown}
Suppose that $\tilde{F}$ has been computed as an approximation to an unknown stochastic matrix $F$
and that we have some bound on the error between $\tilde{F}$ and $F$.
\rev{For example, suppose that for some matrix $\alpha \geq 0$,
\begin{equation*}
\left \lvert \tilde{F}_{ij} - F_{ij} \right \rvert \leq \alpha_{ij} \mbox{ for all } i,  j \in \Omega.
\end{equation*}
In this case, we define
\begin{equation*}
S_{ij} := \max  \{\tilde{F}_{ij} - \alpha_{ij}, 0\} \mbox{ for all } i,  j \in \Omega.
\end{equation*}
We observe that this choice of $S$ is the largest possible so that $F \geq S$
for all $F$ with $\lvert F_{ij} - \tilde{F}_{ij} \rvert \leq \alpha_{ij}$.
Therefore, by Lemma~\ref{lem: monotonicity},
the coefficients $\Q_{ij}(S)$ are as large as possible,
giving the best possible upper bounds.
}

If $S$ is irreducible, we have
\begin{align*}
&\max_{k \in \Omega} \left \lvert \log \pi_k \left (\tilde{F} \right )
- \log \pi_k(F) \right \rvert\\
&\quad\leq \sum_{\substack{i,j \in \Omega \\ i \neq j}}
 \left \lvert \log \left (\tilde{F}_{ij}  - S_{ij}+ \Q_{ij}(S) \right)
- \log(F_{ij} - S_{ij} + \Q_{ij}(S)) \right \rvert\\
&\quad\leq \sum_{\substack{i,j \in \Omega \\ i \neq j}}
 \frac{\left \lvert\tilde{F}_{ij} - F_{ij}\right \rvert}{\Q_{ij}(S)}.
\end{align*}
In general, if $S$ is reducible,
then no statement can be made about the error of the invariant distribution.
In fact, if $S$ is reducible, then there is a reducible, stochastic $F$ with
$\left \lvert \tilde{F}_{ij} - F_{ij} \right \rvert \leq \alpha_{ij}$,
and the invariant distribution of $F$ is not even unique.
\end{remark}

\section{An efficient algorithm for computing sensitivities}
\label{sec: algorithm}
In Theorem~\ref{thm: computing sensitivities} below,
we show that the coefficients $\Q_{ij}(S)$
can be computed by inverting an $L\times L$ matrix
and performing additional operations of cost $O(L^2)$.
Therefore,
the cost of estimating the error in $\pi$ using
either Theorem~\ref{thm: global bound diff sparsity}
or Corollary~\ref{cor: simpler version of global bound}
is comparable to the cost of computing $\pi$.
Moreover, the cost of computing our bounds is the same as
the cost of computing most other bounds in the literature,
for example, those based on the group inverse;
see Remark~\ref{rem: comparison of complexity}.

Our first step is to characterize $\Q_{ij}(S)$ as the solution of a linear equation.
We advise the reader that we will make extensive use of
the notation introduced in Section~\ref{sec: notation}.
In addition, we define
\begin{equation}\label{def: s j bot j}
  S_{j^\bot, j} := (I - e_j e_j^\t)S e_j \text{ and } S_{j,j^\bot} := e_j^\t S (I - e_j e_j^\t)
\end{equation}
to be the $j^{\rm{th}}$ column and row of $S$ with the $j^{\rm{th}}$ entry set to zero,
respectively.

\begin{lemma}
\label{lem: computing transition prob}
Let $S$ be irreducible and substochastic or stochastic.
For $i,j \in \Omega$ with $i \neq j$,
let $q^{ij}(S) \in e_j^\bot$ be the vector defined by
\begin{equation*}
 q_{k}^{ij}(S)  := \P_k[\tau_j< \min \{\tau_i, \tau_\omega\}](S^\omega)
\end{equation*}
for all $k \in \Omega \setminus \{j\}$.
Define $S_j$ as in Section~\ref{sec: notation},
and $S_{j^\bot, j}$ by~\eqref{def: s j bot j}.
The operator $I - S_j + S_j e_i e_i^\t$ is invertible on $e_j^\bot$,
and $q^{ij}(S)$ is the unique solution of the equation
\begin{equation}\label{eq: matrix eqn for pijk}
\left ( I - S_j + S_j e_i e_i^\t \right ) q^{ij}(S)  =  S_{j^\bot, j} .
\end{equation}
\end{lemma}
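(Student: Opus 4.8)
The plan is to characterize $q^{ij}(S)$ by a first-transition analysis for the augmented chain $X^\omega$, to recognize the resulting linear system as~\eqref{eq: matrix eqn for pijk}, and then to prove invertibility of $I - S_j + S_j e_i e_i^\t$ by comparison with $S_j$, whose Neumann series is already controlled by~\eqref{eq: irred and stochastic implies convergent}. First I would fix $k \in \Omega \setminus \{j\}$ and condition on $X^\omega_1$. If $X^\omega_1 = j$, then since $j \notin \{i, \omega\}$ we have $\tau_i \geq 2$ and $\tau_\omega \geq 2$ while $\tau_j = 1$, so the event $\{\tau_j < \min\{\tau_i, \tau_\omega\}\}$ occurs with conditional probability $1$; here it is essential that $\tau_i$ is the first \emph{positive} time at $i$, which in particular handles the case $k = i$. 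If $X^\omega_1 \in \{i, \omega\}$, the event fails. If $X^\omega_1 = l \in \Omega \setminus \{i, j\}$, then by the Markov property the conditional probability of the event equals $q_l^{ij}(S)$. Summing over the first step gives
\begin{equation*}
q_k^{ij}(S) = S_{kj} + \sum_{l \in \Omega \setminus \{i,j\}} S_{kl}\, q_l^{ij}(S), \qquad k \in \Omega \setminus \{j\}.
\end{equation*}

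Next I would unwind the definitions of $S_j$ from Section~\ref{sec: notation} and $S_{j^\bot, j}$ from~\eqref{def: s j bot j}: a direct computation shows that for $x \in e_j^\bot$ and $k \neq j$ one has $\bigl((I - S_j + S_j e_i e_i^\t) x\bigr)_k = x_k - \sum_{l \in \Omega \setminus \{i,j\}} S_{kl} x_l$ and $(S_{j^\bot, j})_k = S_{kj}$, so the displayed system is exactly $(I - S_j + S_j e_i e_i^\t) q^{ij}(S) = S_{j^\bot, j}$. For invertibility, write $\hat S := S_j - S_j e_i e_i^\t$ as an operator on $e_j^\bot$; by the same computation $\hat S$ is obtained from the nonnegative operator $S_j$ by setting its $i^{\rm{th}}$ column to zero, so $0 \leq \hat S \leq S_j$ entrywise, and hence $0 \leq \hat S^{\,s} \leq S_j^{\,s}$ entrywise for every $s \geq 0$ by induction. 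Since $S$ is irreducible and substochastic or stochastic,~\eqref{eq: irred and stochastic implies convergent} shows that $\sum_{s=0}^\infty S_j^{\,s}$ converges, so $\sum_{s=0}^\infty \hat S^{\,s}$ converges as well; by the usual Neumann series argument, $I - \hat S = I - S_j + S_j e_i e_i^\t$ is invertible on $e_j^\bot$ with inverse $\sum_{s=0}^\infty \hat S^{\,s}$. Therefore~\eqref{eq: matrix eqn for pijk} has a unique solution, and since $q^{ij}(S)$ satisfies it, $q^{ij}(S)$ is that solution.

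The step I expect to demand the most care is the first-transition analysis: justifying the conditioning on $X^\omega_1$ via the Markov property and correctly treating the boundary states $i$, $j$, and $\omega$, especially the case $k = i$, where ``returning to $i$'' means the genuine first positive return, so that a first step directly to $j$ still counts as a success. Once~\eqref{eq: irred and stochastic implies convergent} and the domination $0 \leq \hat S^{\,s} \leq S_j^{\,s}$ are in hand, the invertibility and the uniqueness conclusion are routine.
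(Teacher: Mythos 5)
Your proposal is correct and follows essentially the same route as the paper: a first-step (one-transition) decomposition of $\P_k[\tau_j < \min\{\tau_i,\tau_\omega\}]$ yielding the linear system, followed by the entrywise domination $0 \leq S_j - S_j e_i e_i^\t \leq S_j$ and the convergent Neumann series from~\eqref{eq: irred and stochastic implies convergent} to get invertibility. The extra care you flag around the boundary cases (in particular $k=i$ and the first \emph{positive} return time) is handled correctly and matches the paper's treatment.
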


\begin{proof}
Let $i,j \in  \Omega$ with $i \neq j$.
We have
\begin{align*}
\P_k [ \tau_j < \min \{\tau_i, \tau_\omega\}]
 = \sum_{l \in \Omega \cup \{\omega\}} \P_k [ \tau_j < \min \{\tau_i, \tau_\omega\} , X_1 = l ],
\end{align*}
and for $k \neq j$,
\begin{align*}
\P_k [ \tau_j < \min \{\tau_i, \tau_\omega\}  , X_1 = l ] =
\begin{cases}
0 &\mbox{ if } l \in \{i,\omega\}, \\
S_{kj}
&\mbox{ if } l =j, \mbox{ and }\\
\P_l[\tau_j <  \min \{\tau_i, \tau_\omega\}]S_{kl} &\mbox{ if } l \notin \{i,j, \omega\}.
\end{cases}
\end{align*}
Therefore,
\begin{equation}\label{eq: nonmatrix eq for pijk}
\P_k [\tau_j  <  \min \{\tau_i, \tau_\omega\}] =
\sum_{l \notin \{i,j,\omega\}} S_{kl} \P_l [\tau_j  <  \min \{\tau_i, \tau_\omega\}] + S_{kj}
\mbox{ for } k \neq j.
\end{equation}
We observe that equation~\eqref{eq: nonmatrix eq for pijk}
above can be expressed as
\begin{equation}\label{eq: linear eqn for trans prob}
q^{ij} = (S_j - S_j e_i e_i^\t) q^{ij} + S_{j^\bot, j}.
\end{equation}

We now claim that if $S$ is irreducible, then $I - S_j + S_j e_i e_i^\t$ is invertible,
which shows that $q^{ij}$ is the unique solution of~\eqref{eq: linear eqn for trans prob}.
By~\eqref{eq: irred and stochastic implies convergent},
$(I-S_j)^{-1} = \sum_{m=0}^\infty S_j^m$.
We now observe that
\begin{equation*}0 \leq  S_j - S_j e_i e_i^\t \leq S_j, \end{equation*}
so $\sum_{m=0}^\infty (S_j - S_j e_i e_i^\t)^m$ converges,
hence $I - S_j + S_j e_i e_i^\t$ is invertible.
\end{proof}

The proof of Theorem~\ref{thm: computing sensitivities} uses the following lemma.
\begin{lemma}\label{lem: algorithm lemma}
For $S$ substochastic and irreducible,
\begin{equation*}
\Q_{ij}(S)=\frac{e_i^\t (I - S_j)^{-1} S_{j^\bot, j}}{e_i^\t (I-S_j)^{-1} e_i}.
\end{equation*}
\end{lemma}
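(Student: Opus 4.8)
The plan is to combine Lemma~\ref{lem: computing transition prob} with an elementary rank-one argument. By Definition~\ref{def: defn of Qij, probability interpretation} and the definition of $q^{ij}(S)$ in Lemma~\ref{lem: computing transition prob}, we have $\Q_{ij}(S) = q^{ij}_i(S) = e_i^\t q^{ij}(S)$, where $q^{ij}(S) \in e_j^\bot$ is the unique solution of
\[
(I - S_j + S_j e_i e_i^\t)\, q^{ij}(S) = S_{j^\bot, j}.
\]
So the task reduces to extracting the $i$-th component of this solution in closed form; this is essentially a Sherman--Morrison computation, but it is cleaner to do it by hand.

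First I would rewrite the equation as $(I - S_j)\, q^{ij}(S) = S_{j^\bot,j} - \bigl(e_i^\t q^{ij}(S)\bigr) S_j e_i$. Since $S$ is irreducible, $I - S_j$ is invertible on $e_j^\bot$ by~\eqref{eq: irred and stochastic implies convergent}, so we may solve for $q^{ij}(S)$ and then left-multiply by $e_i^\t$ (legitimate because $e_i \in e_j^\bot$ as $i \neq j$), obtaining a scalar equation for $\Q_{ij}(S) = e_i^\t q^{ij}(S)$:
\[
\Q_{ij}(S) = e_i^\t (I-S_j)^{-1} S_{j^\bot,j} - \Q_{ij}(S)\, e_i^\t (I-S_j)^{-1} S_j e_i,
\]
which rearranges to $\Q_{ij}(S)\bigl( 1 + e_i^\t (I-S_j)^{-1} S_j e_i \bigr) = e_i^\t (I-S_j)^{-1} S_{j^\bot,j}$.

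The final step is to identify the factor multiplying $\Q_{ij}(S)$. Using $(I-S_j)^{-1}(I-S_j) = I$ on $e_j^\bot$ and $e_i \in e_j^\bot$, I would write
\[
e_i^\t (I-S_j)^{-1} e_i = e_i^\t (I-S_j)^{-1}\bigl( (I - S_j) + S_j \bigr) e_i = 1 + e_i^\t (I-S_j)^{-1} S_j e_i,
\]
and substitute this identity. The left-hand factor is then $e_i^\t (I-S_j)^{-1} e_i$, which is positive because $(I-S_j)^{-1} = \sum_{m \ge 0} S_j^m$ has nonnegative entries and a $1$ on the diagonal, so we may divide to get
\[
\Q_{ij}(S) = \frac{e_i^\t (I-S_j)^{-1} S_{j^\bot,j}}{e_i^\t (I-S_j)^{-1} e_i},
\]
as claimed. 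There is no real obstacle here; the only point that needs care is that every operator acts on the subspace $e_j^\bot$, so one must keep track of the fact that $e_i$ and $S_{j^\bot,j} = (I - e_j e_j^\t)S e_j$ both lie in $e_j^\bot$, which holds precisely because $i \neq j$.
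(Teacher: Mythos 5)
Your proposal is correct and follows essentially the same route as the paper: both start from the linear system of Lemma~\ref{lem: computing transition prob} and resolve the rank-one term $S_j e_i e_i^\t$ using the identity $1 + e_i^\t(I-S_j)^{-1}S_j e_i = e_i^\t(I-S_j)^{-1}e_i$. The only difference is presentational --- the paper invokes the Sherman--Morrison formula explicitly, while you carry out the equivalent scalar elimination by hand --- and your attention to the positivity of the denominator and to the fact that everything lives in $e_j^\bot$ matches the paper's argument.
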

\begin{proof}
  We recall that
\begin{equation*}
 (I - S_j + S_j e_i e_i^\t)q^{ij}(S) =  S_{j^\bot, j }.
\end{equation*}
Multiplying both sides by $(I-S_j)^{-1}$ then yields
\begin{equation*}
(I+(I - S_j)^{-1}S_j e_i e_i^\t) q^{ij} (S) = (I - S_j)^{-1}S_{j^\bot, j}.
\end{equation*}
Now $S_j \geq 0$ is a substochastic matrix,
so by~\eqref{eq: irred and stochastic implies convergent},
${(I-S_j)^{-1} = \sum_{m=0}^\infty S_j^m \geq 0}$.
Therefore, $1+e_i^\t (I - S_j)^{-1}S_j e_i \geq 1$,
and by the Sherman-Morrison formula,
\begin{align*}
\left (I+(I - S_j)^{-1}S_j e_i e_i^\t \right)^{-1}
&= I -
\frac{(I - S_j)^{-1}S_j e_i e_i^\t}{1+e_i^\t (I - S_j)^{-1}S_j e_i} \\
&= I - \frac{(I - S_j)^{-1}S_j e_i e_i^\t}{e_i^\t (I-S_j)^{-1} e_i}.
\end{align*}
Thus,
\begin{align}
\Q_{ij}(S) = q^{ij}_i(S) &= e_i^\t (I+(I - S_j)^{-1}S_j e_i e_i^\t)^{-1} (I - S_j)^{-1}S_{j^\bot, j} \nonumber \\
&= \frac{e_i^\t (I - S_j)^{-1} S_{j^\bot, j}}{e_i^\t (I-S_j)^{-1} e_i}.
\end{align}
\end{proof}

\begin{theorem}
\label{thm: computing sensitivities}
Let $S \in \Real^{L \times L}$ be irreducible and substochastic.
The set of all coefficients $\Q_{ij}(S)$
can be computed by inverting an $L\times L$ matrix
and then performing additional operations of cost $O(L^2)$.
\end{theorem}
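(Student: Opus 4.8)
The plan is to express every coefficient $\Q_{ij}(S)$ as an explicit rational function of the entries of the single matrix $G := (I - S)^{-1}$, so that after one $L \times L$ inversion each coefficient can be read off in constant time. Since $S$ is irreducible and substochastic, $\eqref{eq: irred and stochastic implies convergent}$ gives $G = \sum_{m=0}^\infty S^m$, so $G$ exists, is computable by a single inversion, and is entrywise positive. Writing $M := I - S$, the first step is the observation that the operator $I - S_j$ on $e_j^\bot$ is exactly the principal submatrix of $M$ obtained by deleting row and column $j$: for $x \in e_j^\bot$ one computes $(I - S_j)x = (I - e_j e_j^\t) M x$, which is the action of that submatrix. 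I will use $\hat\jmath$ to denote deletion of the index $j$, so that $I - S_j = M_{\hat\jmath\hat\jmath}$ as operators on $e_j^\bot$.

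With this identification, I would treat the numerator and denominator of the formula in Lemma~\ref{lem: algorithm lemma} separately. For the numerator $e_i^\t (I - S_j)^{-1} S_{j^\bot, j}$, note that $S_{j^\bot, j}$ is minus the $j$-th column of $M$ with its $j$-th entry removed, since $S_{kj} = -M_{kj}$ for $k \neq j$. The key is then to solve $(I - S_j) x = S_{j^\bot, j}$ in closed form using $M G = I$: reading off entry $(k,j)$ for $k \neq j$ gives $\sum_{l \neq j} M_{kl} G_{lj} = -M_{kj} G_{jj}$, which says precisely that $G_{\hat\jmath, j}/G_{jj}$ solves the system. Hence the numerator equals $G_{ij}/G_{jj}$, with no matrix–vector product required.

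For the denominator $e_i^\t (I - S_j)^{-1} e_i$, which is the $(i,i)$ diagonal entry of the inverse of the deleted submatrix, I would invoke the standard bordered-matrix (Schur complement) identity expressing the inverse of a principal submatrix through the full inverse,
\[ (I - S_j)^{-1} = G_{\hat\jmath\hat\jmath} - G_{jj}^{-1}\, G_{\hat\jmath, j}\, G_{j, \hat\jmath}, \]
whose $(i,i)$ entry is $G_{ii} - G_{ij} G_{ji}/G_{jj}$. Combining this with the numerator computation yields the closed form
\[ \Q_{ij}(S) = \frac{G_{ij}}{G_{ii} G_{jj} - G_{ij} G_{ji}} \]
for all $i \neq j$; the denominator is positive because $G_{ij} > 0$ and $\Q_{ij}(S) > 0$ by Lemma~\ref{lem: monotonicity}.

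The complexity claim is then immediate: computing $G$ is one $L \times L$ inversion, and each of the $L(L-1)$ coefficients $\Q_{ij}(S)$ costs only a constant number of arithmetic operations on entries of $G$, for $O(L^2)$ additional work. The main obstacle is exactly the collapse that makes this possible: a naive use of Lemma~\ref{lem: algorithm lemma} appears to require inverting $I - S_j$ for each of the $L$ values of $j$. The technical heart is therefore the two algebraic identities above—solving the numerator system through $MG = I$ and extracting the denominator through the Schur-complement formula—together with careful bookkeeping of the restriction to $e_j^\bot$ and the deletion of index $j$. Verifying positivity of $G$ and of the $2\times 2$ determinants $G_{ii}G_{jj} - G_{ij}G_{ji}$ is routine given irreducibility.
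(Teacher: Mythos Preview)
Your proposal is correct and takes a genuinely different route from the paper's proof. The paper inverts $A(1) := I - S + e_1 e_1^\t S$ and then, for each $j \neq 1$, uses a Sherman--Morrison--Woodbury update to extract just the entries $A(j)^{-1}_{ij}$ and $A(j)^{-1}_{ii}$ needed for $\Q_{ij}(S) = A(j)^{-1}_{ij}/A(j)^{-1}_{ii}$. You instead invert $M = I - S$ once and obtain the closed form $\Q_{ij}(S) = G_{ij}/(G_{ii}G_{jj} - G_{ij}G_{ji})$ directly from two Schur-complement identities. Your argument is shorter, avoids the loop over $j$ with its rank-two updates, and yields an explicit scalar formula in the entries of $G = (I-S)^{-1}$, which is arguably cleaner. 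One minor trade-off: your approach needs $I-S$ invertible, hence $S$ strictly substochastic, whereas the paper's $A(j)$ is invertible even for stochastic $S$ (only $(I-S_j)^{-1}$ appears); since the paper's convention treats ``substochastic'' as excluding stochastic, this is no real restriction here. A small citation slip: the display~\eqref{eq: irred and stochastic implies convergent} concerns $(I-F_i)^{-1}$, not $(I-S)^{-1}$; the convergence of $\sum_m S^m$ that you need follows instead from $\rho(S) < 1$ for irreducible, strictly substochastic $S$, which is standard.
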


\begin{proof}
We begin with some definitions and notation.
Define
\begin{equation*}
  A(j) := I - S + e_j e_j^\t S =
\bordermatrix{~ &e_j^\bot & e_j \cr
                 e_j^\bot & I-S_j & -S_{j^\bot, j} \cr
             e_j & 0 & 1}.
\end{equation*}
\rev{(The right hand side above denotes the block decomposition of $A$ with respect
to the decomposition $\Real^L \simeq e_j^\bot \oplus \Real e_j$.
Thus, for example, $I-S_j$ is to be interpreted as an operator on $e_j^\bot$;
cf.\ the definition of $S_j$ in Section~\ref{sec: notation}.)}
We observe that $A(j)$ is invertible,
since by~\eqref{eq: irred and stochastic implies convergent}, $I-S_j$ is invertible, so
\begin{equation}\label{eq: algorithm formula for AjInv}
  A(j)^{-1} = \bordermatrix{~ &e_j^\bot &e_j \cr
                        e_j^\bot & (I-S_j)^{-1} & (I-S_j)^{-1} S_{j^\bot, j} \cr
                        e_j & 0 & 1}.
\end{equation}

In the first step of the algorithm, we compute $A(1)^{-1}$,
which costs $O(L^3)$ operations.
Second, we compute $SA(1)^{-1}$.
Given $A(1)^{-1}$, this can be done in $O(L^2)$ operations
using the formula
\begin{align*}
  SA(1)^{-1}
\ignore{ &= \bordermatrix{~ &e_1^\bot &e_1 \cr
                        e_1^\bot & S_1(I-S_1)^{-1} &  S_1 (I-S_1)^{-1} S_{1^\bot, 1} + S_{1^\bot, 1}  \cr
                        e_1 & S_{1,1^\bot} (I-S_1)^{-1} & S_{11} +S_{1,1^\bot} (I-S_1)^{-1} S_{1^\bot, 1} } \\
&= \bordermatrix{~ &e_1^\bot &e_1 \cr
                        e_1^\bot & (I-S_1)^{-1} - I & (I-S_1)^{-1}S_{1^\bot,1}  \cr
                        e_1 & S_{1,1^\bot} (I-S_1)^{-1} &S_{11} +S_{1,1^\bot} (I-S_1)^{-1} S_{1^\bot, 1}  } \\}
&=\bordermatrix{~ &e_1^\bot &e_1 \cr
                        e_1^\bot & A(1)_1^{-1} - I & A(1)^{-1}_{1^\bot,1}  \cr
                        e_1 & S_{1,1^\bot} A(1)_1^{-1} &S_{11} +S_{1,1^\bot} A(1)^{-1}_{1^\bot,1}}.
\end{align*}
(The formula is easily proved by direct calculation using~\eqref{eq: algorithm formula for AjInv}.)
Third, we compute $\Q_{i1}(S)$ for all $i \neq 1$.
By Lemma~\ref{lem: algorithm lemma} and~\ref{eq: algorithm formula for AjInv}, we have
\begin{equation}\label{eq: Q in terms of A1Inv}
  \Q_{i1}(S) = \frac{e_i^\t (I - S_1)^{-1} S_{1^\bot, 1}}{e_i^\t (I-S_1)^{-1} e_i}
= \frac{A(1)^{-1}_{i1}}{A(1)^{-1}_{ii}}.
\end{equation}
Therefore, once $A(1)^{-1}$ has been computed,
it costs $O(L)$ operations to compute $\Q_{i1}(S)$ for all $i \neq 1$.

We compute the remaining sensitivities $\Q_{ij}(S)$ for $j \neq 1$ by a formula analogous
to~\eqref{eq: Q in terms of A1Inv},
but with $A(j)^{-1}$ in place of $A(1)^{-1}$.
To do so efficiently,
we use the Sherman-Morrison-Woodbury identity to derive a formula expressing $A(j)^{-1}$ in terms of $A(1)^{-1}$:
\begin{align}
A(j)^{-1} &= \left (I - S + e_1 e_1^\t S + (e_j e_j^\t S - e_1 e_1^\t S) \right )^{-1} \nonumber\\
&= A(1)^{-1}
- A(1)^{-1} \begin{pmatrix} e_j &-e_1 \end{pmatrix}
C(j)^{-1}
\begin{pmatrix} e_j^\t \\ e_1^\t \end{pmatrix} SA(1)^{-1},  \label{eq: algorithm formula two}
\end{align}
where
\begin{equation*}
C(j) :=  I
+ \begin{pmatrix} e_j^\t \\ e_1^\t \end{pmatrix} S A(1)^{-1}
\begin{pmatrix} e_j &-e_1 \end{pmatrix}.
\end{equation*}
In the fourth step of the algorithm, we loop over all $j \neq 1$.
For each $j$, we first compute $C(j)^{-1}$,
which requires a total of $O(L)$ operations.
We then compute $A(j)^{-1}e_j$ at a cost of $O(L)$ operations using a formula derived from~\eqref{eq: algorithm formula two}:
\begin{align*}
  A(j)^{-1}e_j &= A(1)^{-1} e_j
- A(1)^{-1} \begin{pmatrix} e_j &-e_1 \end{pmatrix}
C(j)^{-1}
\begin{pmatrix} e_j^\t \\ e_1^\t \end{pmatrix} SA(1)^{-1}e_j \\
&= A(1)^{-1} e_j
- \begin{pmatrix}A(1)^{-1}  e_j &-A(1)^{-1} e_1 \end{pmatrix}
C(j)^{-1}
\begin{pmatrix} SA(1)^{-1}_{jj}  \\ SA(1)^{-1}_{1j} \end{pmatrix}.
\end{align*}
Next, we must compute $A(j)^{-1}_{ii}$ for all $i \neq j$.
By~\eqref{eq: algorithm formula two},
\begin{align*}
  A(j)^{-1}_{ii} = A(1)^{-1}_{ii} -  \begin{pmatrix} A(1)^{-1}_{ij} &-A(1)^{-1}_{i1} \end{pmatrix}
C(j)^{-1}
\begin{pmatrix}SA(1)^{-1}_{ji} \\ SA(1)^{-1}_{1i} \end{pmatrix},
\end{align*}
so the cost of computing $A(j)^{-1}_{ii}$ for each $i \neq j$ is $O(L)$.
Finally, we compute $\Q_{ij}(S)$ for all $i \neq j$.
By Lemma~\ref{lem: algorithm lemma} and~\ref{eq: algorithm formula for AjInv}, we have
\begin{equation}\label{eq: Q in terms of A1Inv}
  \Q_{ij}(S) = \frac{e_i^\t (I - S_j)^{-1} S_{j^\bot, j}}{e_i^\t (I-S_j)^{-1} e_i}
= \frac{A(1)^{-1}_{ij}}{A(1)^{-1}_{ii}},
\end{equation}
so this last step costs $O(L)$.

The total cost of the algorithm described above is
a single $L \times L$ matrix inversion plus $O(L^2)$.
\end{proof}

\begin{remark}\label{rem: comparison of complexity}
Most perturbation bounds in the literature have the same computational complexity as our bound.
For example, some bounds are based on the group inverse of $I-F$~\cite{Meyer:Condition1980,KirkNeuShade:PazIneqPertBounds}.
The cost of computing the group inverse
is $O(L^3)$~\cite{GolMey:ComputingInvDist},
so our bound has the same complexity as~\cite{Meyer:Condition1980,KirkNeuShade:PazIneqPertBounds}.
Computing the bound on relative error in~\cite{IpsMey:UnifStab} requires finding
$\left \lVert (I-F_j)^{-1} \right \rVert_\infty$ for all $j$; see~\eqref{eq: intro ipsen meyer bound}.
This could be done in $O(L^3)$ operations by methods similar to Theorem~\ref{thm: computing sensitivities},
so we conjecture that our bound and the bound of~\cite{IpsMey:UnifStab} have the same complexity.
On the other hand, the bound on relative error in~\cite{OC:RelErr}
(see~\eqref{eq: oc bound from intro}) requires almost no calculation at all.
\end{remark}

\rev{
\begin{remark}
We give the algorithm above to show that
the cost of computing our bounds is comparable to the cost of other bounds, in principle.
We do not claim that the algorithm is always reliable,
since we have not performed a complete stability analysis.
Nonetheless, in many cases, the computation of $\Q_{ij}(S)$ is stable
even when the computation of $\pi(F)$ is unstable.
For example, suppose that $S = \alpha F$ for $F$ a stochastic matrix
and $\alpha \in (0,1)$.
(This would be a good choice of $S$ if all entries of $F$ were known with relative error $\alpha^{-1}$;
cf.\ Remark~\ref{rem: error estimate when true F is unknown}.)
Let $\lVert M \rVert_\infty$ denote the operator norm of the matrix $M \in \Real^{L \times L}$
with respect to the $\ell^\infty$-norm
\begin{equation*}
  \lVert v \rVert_\infty := \max_{i = 1, \dots, L} \lvert v_i \rvert \text{ for } v \in \Real^L.
\end{equation*}
It is a standard result that
\begin{equation*}
  \lVert M \rVert_\infty = \max_{i = 1, \dots, L} \sum_{j=1}^L \lvert M_{ij} \rvert.
\end{equation*}
Therefore, since $F$ is stochastic and $0 \leq S_j \leq S = \alpha F$,
we have $\lVert S_j \rVert_\infty \leq \alpha$,
and so
\begin{equation*}
  \left \lVert (I - S_j)^{-1} \right \rVert_\infty
  = \left \lVert \sum_{n=0}^\infty S_j^n \right \rVert_\infty
  \leq \sum_{n=0}^\infty \lVert S_j^n  \rVert_\infty
  \leq \sum_{n=0}^\infty \alpha^n = \frac{1}{1-\alpha}.
\end{equation*}
Moreover,
\begin{equation*}
  \lVert I -S_j \rVert_\infty \leq 2,
\end{equation*}
so the condition number for the inversion of $I - S_j$ satisfies
\begin{equation*}
  \kappa_\infty ( I- S_j) :=
  \lVert I- S_j \rVert_\infty
  \lVert (I- S_j)^{-1} \rVert_\infty
  \leq \frac{2}{1-\alpha}.
\end{equation*}
We conclude that if $\alpha$ is not too close to one,
then the algorithm is stable.
For example, if the entries of $F$ are known with $2\%$ error,
then we choose $S = 0.98 F$, and we have $\kappa_\infty(F) \leq 100$.
Note that this estimate holds for any $F$,
no matter how unstable the computation of $\pi$ may be.
\end{remark}}

\section{The hilly landscape example}
\label{sec: hilly landscape}
In this section, we discuss an example in which
the invariant distribution is very sensitive
to some entries of the transition matrix,
but insensitive to others.
The example arose from a problem in computational statistical physics.
We will use the example to compare our results with previous work,
especially~\cite{ChoMey:MeanFPT,IpsMey:UnifStab,OC:RelErr}
and the bounds on absolute error summarized in~\cite{ChoMey:Survey}.

\subsection{Transition matrix and physical interpretation}
Our hilly landscape example is a simple analogue of the dynamics
of a single particle in contact with a heat bath.
Define $V:\Real \rightarrow \Real$ by
\begin{equation*}
  V(x) = \frac{1}{4 \pi} \cos(4 \pi x)
\end{equation*}
Take $L \in \mathbb{N}$,
and let $\Omega := \{1,2, \dots, L\}$ with periodic boundary conditions;
that is, take $\Omega := \mathbb{Z} / L \mathbb{Z}$.
Given $V$, we define a probability distribution on $\Omega$ by
\begin{equation*}
  \pi(i) := \frac{\exp (- LV(i/L))}{\sum_{k = 1}^L \exp (- LV(k/L))}.
\end{equation*}
The measure $\pi$ is in detailed balance with the Markov chain $X$ having transition matrix
$F \in \Real^{\Omega \times \Omega}$ defined by
\begin{alignat*}{3}
F_{ii} &:= \frac{1}{2} \left (\frac{\pi(i)}{\pi(i-1) + \pi(i)} + \frac{\pi(i)}{\pi(i+1) + \pi(i)} \right )
&&\mbox{ for all } i\in \Omega, \\
F_{i, i+1} &:= \frac{1}{2}\frac{\pi(i+1)}{\pi(i+1) + \pi(i)} &&\mbox{ for all } i\in \Omega, \\
F_{i, i-1} &:= \frac{1}{2}\frac{\pi(i-1)}{\pi(i-1) + \pi(i)} &&\mbox{ for all } i\in \Omega, \mbox{ and }\\
F_{ij} &:= 0 &&\mbox{ otherwise.}
\end{alignat*}
(In the definition of $F$, $F_{L,L+1}$ means $F_{L,1}$ and $F_{1,0}$ means $F_{1,L}$,
since we take $\Omega$ with periodic boundary conditions.)

We interpret $X$ as the position of a particle which moves through the interval $(0,1]$
with periodic boundary conditions.
If $V(i) > V(j)$, we say that $j$ is \emph{downhill} from $i$.
When the inequality is reversed we say that $j$ is \emph{uphill} from $i.$
Under the dynamics prescribed by $F$,
the particle is more likely to move downhill than uphill.
In fact, as $L$ tends to infinity,
$\pi$ becomes more and more concentrated near the minima of $V$.
For large $L$,
the particle spends most of the time near minima of $V$,
and transitions of the particle  between minima occur rarely.

\subsection{Sensitivities for the hilly landscape transition matrix}
\label{sec: calculations and figs}
In Figure~\ref{fig: sensitivities}, we plot $-\log \Q_{ij}(\alpha F)$
versus $i$ and $j$ for $F$ the hilly landscape transition matrix with $L=40$
and $\alpha \in \{ 0.7,0.8,0.9,0.95,0.98,1\}$.
The purpose of this section is to give an intuitive explanation of the main features observed in the figure.
Recall that the potential $V$ is shaped roughly like a ``W'' with peaks at $0$, $\frac{1}{2}$, and $1$
and valleys at $\frac{1}{4}$ and $\frac{3}{4}$.
When $L=40$, the peaks correspond to the indices $0$, $20$, and $40$ in $\Omega$,
and the valleys correspond to $10$ and $30$.
(To be precise, $0$ and $40$ are identical since we take periodic boundary conditions.)

Now consider the case $\alpha =1$.
We observe that $-\log \Q_{20,j}(F)$ is small for all $j$,
so $\Q_{20,j}(F)^{-1}$ is small,
and $\pi(F)$ is insensitive to perturbations which change the
transition probabilities from the peak to other points.
This is as expected, since the probability $\P_{20}[\tau_j<\tau_{20}](F) = \Q_{20,j}(F)$
of hitting a point $j$ in the valley before returning to the peak should be fairly large.
On the other hand, $-\log \Q_{30,10}(F)$ is enormous,
so $\pi(F)$ is sensitive to the transition probability from the valley to the peak.
This is also as expected, since the probability $\P_{30}[\tau_{10} < \tau_{30}](F)$
of climbing from the valley to the peak without falling back into the valley should be small.
To explain the small values of $-\log \Q_{ij}(F)$ observed near the diagonal,
we observe that for all $i \in \Omega$ and all $L$,
\begin{equation*}
  F_{i,i+1} = \frac{1}{2} \frac{1}{1 + \exp\left [ L \left ( V \left (\frac{i+1}{L} \right ) - V \left (\frac{i}{L} \right ) \right ) \right ]}
\geq \frac{1}{2} \frac{1}{1+\exp({\rm Lip}(V))} = \frac{1}{2} \frac{1}{1+\exp(1)},
\end{equation*}
where ${\rm Lip}(V)=1$ is the Lipschitz constant of the potential $V$.
\footnote{\rev{A function $f : \Real\rightarrow \Real$ is Lipschitz if for some
$K \geq 0$, $\lvert f(x) - f(y) \rvert \leq K \lvert x - y\rvert$ for all $x,y \in \Real$.
The Lipschitz constant, ${\rm Lip}(f)$, of a function $f$
is the smallest $K$ for which the bound in the last sentence holds.}}
Therefore, by Corollary~\ref{cor: easy bound on log deriv},
\begin{equation*}
\Q_{i,i+1}(F)^{-1} \leq \frac{1}{F_{i,i+1}} \leq 2(1 + \exp(1)).
\end{equation*}
The same estimate holds for $\Q_{i,i-1}(F)$.

\ignore{
In Figure~\ref{fig: kappa for alpha F}, we plot $\sens_{ij}(0.95 F)=\frac{1}{\P_i[\tau_j < \tau_i]}$
versus $i$ and $j$.
These sensitivities would be relevant, for example, if all entries of $F$ were known with $5\%$
accuracy; see Remark~\ref{rem: error estimate when true F is unknown}.}

The coefficients $\Q_{ij}(\alpha F)$ for $\alpha < 1$ share many features with $\Q_{ij}(F)$.
These coefficients would be relevant if $F$ were known with relative error $1-\alpha$;
see Remark~\ref{rem: error estimate when true F is unknown}.
The main difference between $\Q_{ij}(\alpha F)$ and $\Q_{ij}(F)$ is that $\Q_{ij}(\alpha F)$
is small whenever the minimum number of time steps
required to transition between $i$ and $j$ is large.
The reader is directed to  Section~\ref{sec: bound below by random walk} for discussion of a related phenomenon.
Observe that this effect grows more dominant as $\alpha$ decreases.
We also note that $\Q_{ij}(\alpha F)^{-1}$ is again small near the diagonal.
In fact, by Lemma~\ref{lem: monotonicity}, we have
\begin{equation*}
  \Q_{i,i+1}(\alpha F)^{-1} \leq \frac{1}{\alpha F_{i,i+1}} \leq \frac{2(1+\exp(1))}{\alpha}.
\end{equation*}
The same estimate holds for $\Q_{i,i-1}(\alpha F)$.

\begin{figure}
\caption{Sensitivities for the hilly landscape transition matrix.}
\label{fig: sensitivities}
\begin{subfigure}{0.45\textwidth}
\includegraphics{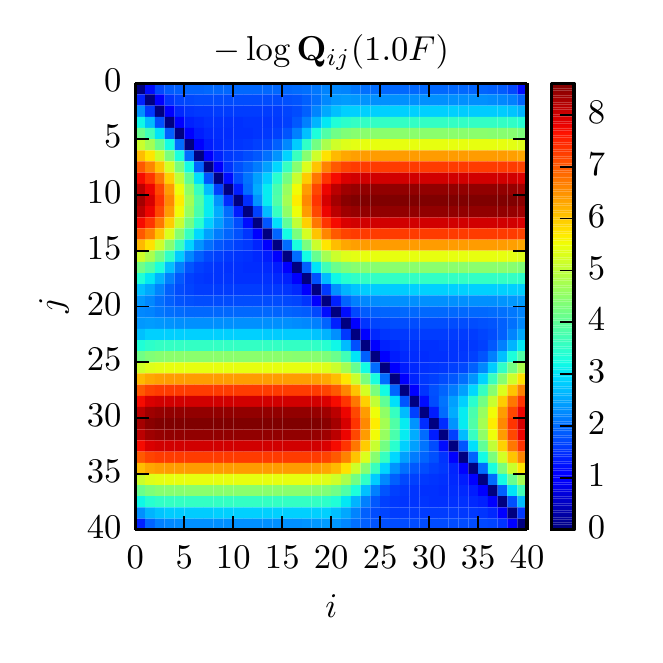}
\end{subfigure}
\qquad
\begin{subfigure}{0.45\textwidth}
\includegraphics{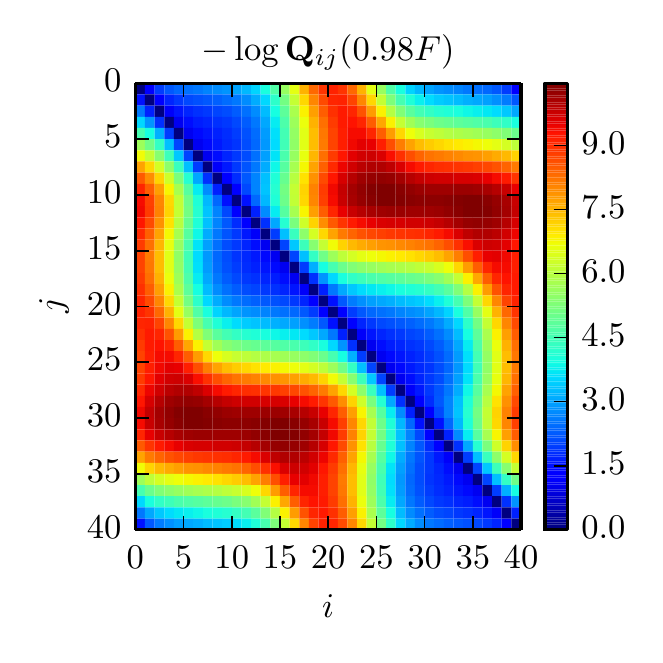}
\end{subfigure}
\\
\begin{subfigure}{0.45\textwidth}
\includegraphics{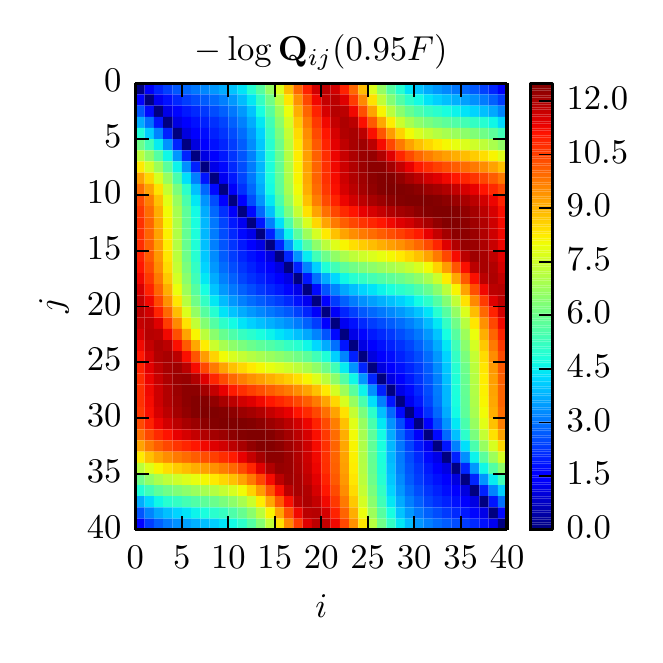}
\end{subfigure}
\qquad
\begin{subfigure}{0.45\textwidth}
\includegraphics{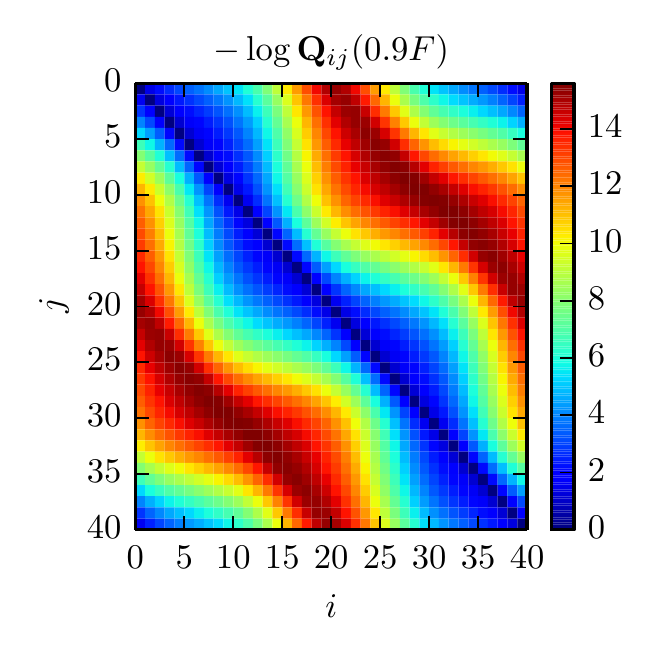}
\end{subfigure}
\\
\begin{subfigure}{0.45\textwidth}
\includegraphics{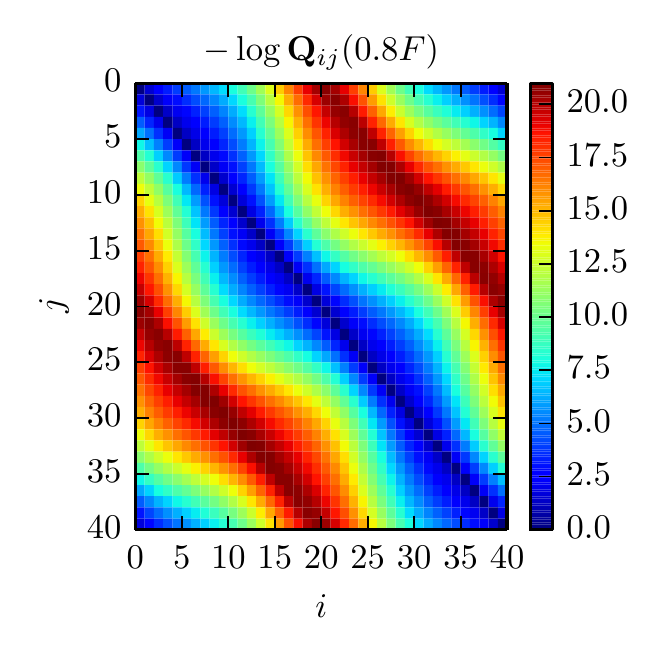}
\end{subfigure}
\qquad
\begin{subfigure}{0.45\textwidth}
\includegraphics{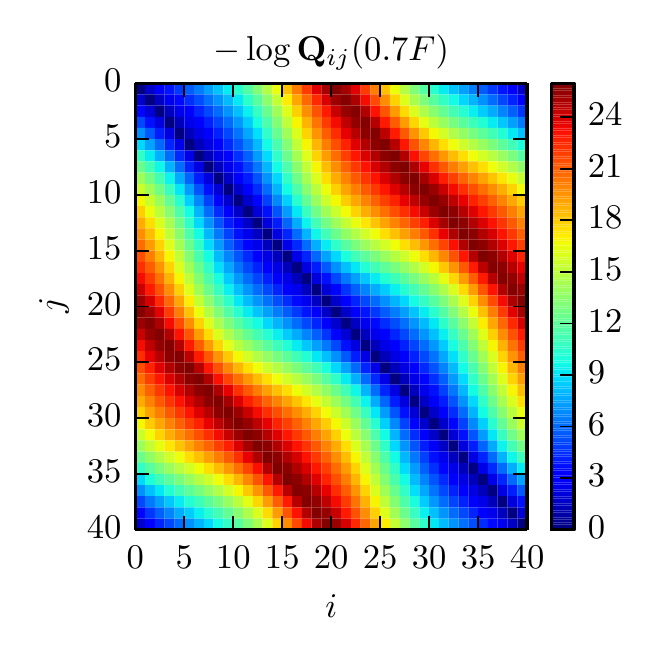}
\end{subfigure}
\end{figure}

\subsection{Mean first passage times and related relative error bounds}
\label{sec: bounds by mean fpt}

Section~4 of~\cite{ChoMey:MeanFPT} also suggests bounds on relative error in terms of certain first passage times.
A comparison is therefore in order.
We record a simplified version of these results below.

\begin{theorem}{\cite[Corollaries 4.1,4.2]{ChoMey:MeanFPT}}\label{thm: cho meyer result}
Let $F$ and $\tilde{F}$ be irreducible stochastic matrices.
We have
\begin{equation*}
\frac{\pi_m(\tilde{F}) - \pi_m(F)}{\pi_m(F)}
= \sum_{\substack{i,j \in \Omega \\ i \neq j}}
\pi_i(\tilde{F})((1-\delta_{jm})\E_j[\tau_m]-(1 - \delta_{im}) \E_i[\tau_m])
\left (\tilde{F}_{ij} - F_{ij} \right ),
\end{equation*}
where the expectations are taken for the chain with transition matrix $F$ and $\delta$ is the Kronecker delta function.
Therefore,
\begin{equation*}
  \left \lvert \frac{\pi_m(\tilde{F}) - \pi_m(F)}{\pi_m(F)}\right \rvert \leq \sum_{\substack{i,j \in \Omega \\ i \neq j}}
\lvert (1-\delta_{jm})\E_j[\tau_m]-(1 - \delta_{im}) \E_i[\tau_m]\rvert
\left \lvert \tilde{F}_{ij} - F_{ij} \right \rvert.
\end{equation*}
\end{theorem}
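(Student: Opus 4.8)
The plan is to obtain the identity from the classical exact perturbation formula for the stationary vector in terms of the group inverse of $I-F$, together with the equally classical expression of the group inverse in terms of mean first passage times; the bound then follows by the triangle inequality.

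First I would set $A^{\#} := (I-F)^{\#}$, the group inverse of $I-F$, which for an irreducible stochastic $F$ satisfies $(I-F)A^{\#} = A^{\#}(I-F) = I - e\,\pi(F)^{\t}$. Writing $E := \tilde{F} - F$ and using $\pi(\tilde{F})^{\t}(I-\tilde{F}) = 0$ gives $\pi(\tilde{F})^{\t}(I-F) = \pi(\tilde{F})^{\t}E$; right-multiplying by $A^{\#}$ and using $\pi(\tilde{F})^{\t} e = 1$ yields the exact identity
\[
  \pi(\tilde{F})^{\t} - \pi(F)^{\t} = \pi(\tilde{F})^{\t} E\, A^{\#}
\]
(cf.~\cite{Schweitzer:PertMarkov,Meyer:Condition1980}). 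Next I would expand $E$ in the off-diagonal coordinates used in Section~3, $E = \sum_{i \neq j} (\tilde{F}_{ij} - F_{ij})\,(e_i e_j^{\t} - e_i e_i^{\t})$, which turns the identity above into
\[
  \pi_m(\tilde{F}) - \pi_m(F) = \sum_{i \neq j} \pi_i(\tilde{F})\,(\tilde{F}_{ij} - F_{ij})\,\bigl( A^{\#}_{jm} - A^{\#}_{im} \bigr).
\]

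Then I would substitute the standard relation $A^{\#}_{km} = A^{\#}_{mm} - \pi_m(F)\,(1-\delta_{km})\,\E_k[\tau_m]$, with the expectation taken for the chain with transition matrix $F$ (this is the content of the mean-first-passage-time formulas in~\cite{ChoMey:MeanFPT,Meyer:GroupInvMarkovChains1975}). Subtracting the $k=i$ and $k=j$ versions cancels $A^{\#}_{mm}$ and expresses $A^{\#}_{jm}-A^{\#}_{im}$ as $\pi_m(F)$ times a difference of the $\E[\tau_m]$'s; plugging this in and dividing by $\pi_m(F)>0$ gives the displayed identity. The bound is then immediate: take absolute values, apply the triangle inequality, and use $\pi_i(\tilde{F}) \le 1$ since $\pi(\tilde{F})$ is a probability vector.

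I do not expect a genuine obstacle here, since both ingredients are classical and citable; the only points needing care are the bookkeeping of the $-e_i e_i^{\t}$ terms (and hence the signs) in the coordinate expansion of $E$, and the Kronecker-$\delta$ corrections, which appear precisely to suppress the spurious ``$\E_m[\tau_m]$'' contributions when $i = m$ or $j = m$. One could instead try to derive everything from Theorem~\ref{thm: derivative of invariant measure} by writing $\pi_m(\tilde{F})-\pi_m(F)$ as a line integral of $\partial \pi_m/\partial F_{ij}$ along the segment from $F$ to $\tilde{F}$, but the integrand there involves occupation times up to $\tau_i$ for the \emph{interpolated} chain rather than $\tau_m$ for $F$, so this route does not obviously produce the clean ``mixed'' form above (with $\pi_i(\tilde{F})$ and expectations for $F$) without the exact group-inverse identity.
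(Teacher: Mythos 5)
The paper itself does not prove Theorem~\ref{thm: cho meyer result}; it is quoted from \cite{ChoMey:MeanFPT}, so your argument stands on its own. The route you choose is the standard one and every step is sound: the identity $\pi(\tilde F)^\t - \pi(F)^\t = \pi(\tilde F)^\t E A^{\#}$ follows exactly as you say from $(I-F)A^{\#} = I - e\pi(F)^\t$; the expansion $E = \sum_{i\neq j}(\tilde F_{ij}-F_{ij})(e_ie_j^\t - e_ie_i^\t)$ is legitimate because $Ee=0$; the relation $A^{\#}_{km} = A^{\#}_{mm} - \pi_m(F)(1-\delta_{km})\E_k[\tau_m]$ is the correct link to mean first passage times; and discarding $\pi_i(\tilde F)\le 1$ gives the second display.

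The one substantive point is the sign, which you flag as ``needing care'' but then assert works out. Carried through, your computation gives $A^{\#}_{jm} - A^{\#}_{im} = \pi_m(F)\bigl((1-\delta_{im})\E_i[\tau_m] - (1-\delta_{jm})\E_j[\tau_m]\bigr)$, which is the \emph{negative} of the bracket in the theorem as printed. You can adjudicate the sign against Theorem~\ref{thm: derivative of invariant measure}: taking $\tilde F = F + \eps(e_ie_j^\t - e_ie_i^\t)$ and $m=i$, your identity yields $\frac{d}{d\eps}\pi_i = -\pi_i(F)^2\,\E_j[\tau_i] < 0$ (increasing the probability of leaving $i$ decreases $\pi_i$), in agreement with Theorem~\ref{thm: derivative of invariant measure}, whereas the display as printed would give $+\pi_i(F)^2\,\E_j[\tau_i]$; a two-state example confirms the same. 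So your derivation is the correct one, and the printed statement has the two first-passage-time terms transposed. This is immaterial for the absolute-value bound and for every subsequent use in Section~\ref{sec: bounds by mean fpt}, but you should state explicitly that you obtain the identity up to this sign rather than claiming to reproduce the display verbatim. Your closing remark is also apt: the line-integral route through Theorem~\ref{thm: derivative of invariant measure} does not readily produce the mixed form with $\pi_i(\tilde F)$ and expectations for $F$, so the group-inverse identity is genuinely needed here.
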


Taking the maximum over all $m$ in the second sentence of Theorem~\ref{thm: cho meyer result}
yields a pertubation result having a form similar to
Theorem~\ref{thm: global bound diff sparsity},
but with
\begin{equation*}
  \beta_{ij}(F) := \max_{m\in\Omega} \lvert (1-\delta_{jm})\E_j[\tau_m]-(1 - \delta_{im}) \E_i[\tau_m]\rvert
\end{equation*}
in place of $\Q_{ij}(S)^{-1}$.
In the next paragraph, we show for the hilly landscape transition matrix that for some values of $i$ and $j$,
$\beta_{ij}$ grows exponentially with $L$ while $\Q_{ij}(S)^{-1}$ remains bounded.
Thus, the results of~\cite{ChoMey:MeanFPT} dramatically overestimate the error due to some perturbations.

To derive the estimate in the second sentence of Theorem~\ref{thm: cho meyer result}
from the exact formula in the first sentence, one discards a factor of $\pi_i(\tilde{F})$.
Therefore, roughly speaking, the estimate is poor when $\pi_i(\tilde{F})$ is small.
To give a specific example,
let $F$ be the hilly landscape transition matrix,
assume that $L$ is even, and let $i=L/2$ and $j = L/2+1$.
Observe that we have chosen $i$ at a peak of the potential $V$, so
\begin{equation*}
\pi_i(F) \gtrsim  \exp(-L)
\end{equation*}
(Here, we use the symbols $\lesssim$ and $\gtrsim$ to denote bounds up to multiplicative constants,
so in the last line above, we mean that there is some $C>0$ so that the left hand side is bounded above by $C\exp(-L)$ for all $L$.)
Let $F^\eps = F + \eps (e_i e_j^\t - e_i e_i^\t)$.
We have
\begin{equation*}
  \frac{\partial \log \pi_m}{\partial F_{ij}}(F) = \left . \frac{d}{d\eps} \right \vert_{\eps = 0} \pi_m(F^\eps)
= \pi_i(F)((1-\delta_{jm})\E_j[\tau_m]-(1 - \delta_{im}) \E_i[\tau_m])
\end{equation*}
by Theorem~\ref{thm: cho meyer result}.
Therefore, by Theorem~\ref{thm: bound on log deriv},
\begin{align*}
  \max_{m \in \Omega} \left \lvert \frac{\partial \log \pi_m}{\partial F_{ij}}(F) \right \rvert
&= \pi_i(F) \max_{m \in \Omega} \lvert (1-\delta_{jm})\E_j[\tau_m]-(1 - \delta_{im}) \E_i[\tau_m] \rvert \\
&\geq \frac{1}{2} \frac{1}{\P_i[\tau_j < \tau_i]},
\end{align*}
and so
\begin{equation*}
\beta_{ij}(F) \geq  \frac{1}{2} \frac{1}{\pi_i(F)\P_i[\tau_j < \tau_i]}
\gtrsim \exp(L).
\end{equation*}
Now suppose that the substochastic matrix $S$ appearing in our bound is chosen for each $L$
so that $S_{ij}$ for $i=L/2$ and $j = L/2+1$ is bounded above zero uniformly
as $L \rightarrow \infty$.
For example, one might choose $S$ to be a multiple of $F$ as in
Section~\ref{sec: calculations and figs} or
a multiple of a simple random walk transition matrix as in
Section~\ref{sec: bound below by random walk}.
Then by Lemma~\ref{lem: monotonicity}, we have
\begin{equation*}
\Q_{ij}(S)^{-1} \leq \frac{1}{S_{ij}},
\end{equation*}
so $\Q_{ij}(S)^{-1}$ is bounded.
Thus, $\beta_{ij}(F)$ is a poor estimate of the sensitivity of the $ij^{\rm{th}}$ entry for this problem.

\subsection{The spectral gap and related absolute error bounds}
\label{sec: comparison with cho meyer survey}
The survey article~\cite{ChoMey:Survey} lists eight condition numbers $\kappa_i(F)$
for $i = 1, 2, \dots, 8$ for which bounds of the form
\begin{equation*}
  \left \lVert \pi(F) - \pi(\tilde{F}) \right \rVert_p \leq \kappa_i(F) \norm{F - \tilde{F}}_{p'}
\end{equation*}
hold.
(The H\"{o}lder exponents $p$ and $p'$ vary with the choice of condition number.)
\rev{Some of these condition numbers are based on ergodicity coefficients~\cite{Seneta:PertMeasuredByErgCfcts,Seneta:SensitivityAnalysisRkOneUpdates},
some on mean first passage times~\cite{ChoMey:MeanFPT},
and some on generalized inverses of the characteristic matrix
$I-F$~\cite{Meyer:Condition1980,Schweitzer:PertMarkov,FundMeyer:SensStationaryDist,IpsMey:UnifStab,HavivVDHeyden:PertBounds}.}
We prove for the hilly landscape transition matrix $F$ that $\kappa_i(F)$
increases exponentially with $L$ for all $i$.
By contrast, we have already seen that many of the coefficients
$\Q_{ij}(\alpha F)^{-1}$ are bounded as $L$ tends to infinity.

Our proof that the condition numbers increase exponentially is based on
an analysis of the spectral gap of $F$.
Let $\sigma(F)$ denote the spectrum of $F$.
The spectral gap $\gamma$ is defined to be
\begin{equation}\label{eq: defn of spectral gap}
\gamma := 1 - \max\{\lvert \lambda \rvert : \lambda \in \sigma(F) \setminus \{1\} \}.
\end{equation}
We use the bottleneck inequality~\cite[Theorem~7.3]{LevPerWil:Mixing} to show that the spectral gap
of the hilly landscape transition matrix decreases exponentially with $L$.
For convenience, assume that $L$ is even, and let
\begin{equation*}
E := \left \{1, \dots, \frac{L}{2} \right \}.
\end{equation*}
The bottleneck ratio~\cite[Section~7.2]{LevPerWil:Mixing}
for the partition $\{E,E^\text{\tiny c}\}$ is
\begin{align*}
\Phi(E,E^\text{\tiny c}) &= \frac{\pi \left (\frac{L}{2}\right ) F_{\frac{L}{2}, \frac{L}{2}+1 } + \pi(1) F_{1,0}}{\frac{1}{2}} \\
&\lesssim \frac{\exp(-L) }{1 + \exp \left [L \left ( V \left (\frac1L \right) -   V \left ( 0 \right) \right )\right ]} \\
&\lesssim \exp(-L).
\end{align*}
(As in the last section, we use the symbols $\lesssim$ and $\gtrsim$ to denote bounds up to multiplicative constants.)
Therefore, by the bottleneck inequality,
the mixing time $t_{\mathrm{mix}}$~\cite[Section~4.5]{LevPerWil:Mixing} satisfies
\begin{equation*}
t_{\mathrm{mix}} \geq \frac{1}{4\Phi(E,E^\text{\tiny c})} \gtrsim \exp(L).
\end{equation*}
By~\cite[Theorem~12.3]{LevPerWil:Mixing},
\begin{equation*}
  \frac{1}{\gamma} \log \left (\frac{4}{\min \{ \pi(i):i \in \Omega\}} \right )
= \frac{\log(4) + L}{\gamma} \geq t_{\mathrm{mix}}.
\end{equation*}
Therefore,
\begin{equation}\label{eq: lower bound spectral gap}
  \frac{1}{\gamma} \gtrsim \frac{\exp(L)}{L},
\end{equation}
and we see that the spectral gap decreases exponentially in $L$.

We now relate the condition numbers to the spectral gap.
Using Equation~(3.3) and the table at the bottom of page~147 in~\cite{ChoMey:Survey},
\rev{and also~\cite[Corollary~2.6]{Kirkland:OnConditionNumsMarkov2002}},
we have
\begin{equation}\label{eq: meyer bound on cond numbers}
  \kappa_i \gtrsim \frac{1}{L \min \{|1-\lambda|: \lambda \in \sigma(F) \setminus \{1\}\} }
\end{equation}
for all $i = 1, \dots, 8$.
Now we claim that \rev{for sufficiently large $L$},
\begin{equation}\label{eq: abs spec gap and spec gap are same}
\gamma  = 1 - \max\{\lvert \lambda \rvert : \lambda \in \sigma(F) \setminus \{1\}\}
=  \min \{|1-\lambda|:\lambda \in \sigma(F) \setminus \{1\}\},
\end{equation}
in which case~\eqref{eq: lower bound spectral gap} and~\eqref{eq: meyer bound on cond numbers}
imply that all condition numbers grow exponentially with $L$.
To see this, we first observe that since $F$ is reversible,
its spectrum is real~\cite[Lemma~12.2]{LevPerWil:Mixing}.
Moreover,
\begin{equation*}
  F_{ii} \geq \frac{1}{1 + \exp({\rm Lip}( V))} = \frac{1}{1+\exp(1)} >0  \mbox{ for all } i \in \Omega,
\end{equation*}
where ${\rm Lip}(V)=1$ is the Lipschitz constant of
$V$.
Therefore, using the Gershgorin circle theorem we have
\begin{equation}\label{eq: lower bound on evals}
\lambda \geq \frac{2}{1 + \exp(1)} - 1
\end{equation}
for all $\lambda \in \sigma(F)$.
Inequality~\eqref{eq: lower bound on evals} shows that
$\sigma(F)$ is bounded above $-1$ uniformly in $L$,
and by~\eqref{eq: lower bound spectral gap},
\begin{equation*}
\lim_{L\rightarrow \infty} \gamma
= \lim_{L \rightarrow \infty}  1 - \max\{\lvert \lambda \rvert : \lambda \in \sigma(F) \setminus \{1\}\}= 0.
\end{equation*}
It follows that for sufficiently large $L$,
$\max\{\lvert \lambda \rvert : \lambda \in \sigma(F) \setminus \{1\}\}$ is attained for $\lambda >0$.
Thus, equation~\eqref{eq: abs spec gap and spec gap are same} holds,
and we conclude using~\eqref{eq: lower bound spectral gap}
and~\eqref{eq: meyer bound on cond numbers} that
\begin{equation*}
 \kappa_i \gtrsim \frac{\exp(L)}{L^2}
\end{equation*}
for all $i = 1, \dots, 8$.

\subsection{Bounds below by a random walk}\label{sec: bound below by random walk}
Let $Y$ be the random walk on $\Omega$ with transition matrix
\begin{alignat*}{3}
P_{ii}  = P_{i, i+1} = P_{i, i-1} &:= \frac{1}{3} &&\mbox{ for all } i\in \Omega, \mbox{ and }\\
P_{ij} &:= 0 &&\mbox{ otherwise.}
\end{alignat*}
(As above, since $\Omega$ has periodic boundaries,
$P_{L,L+1}$ means $P_{L,1}$, etc.)
In this section, we use Theorem~\ref{thm: global bound diff sparsity}
to relate $\Q_{ij}(P)$
with $\Q_{ij}(F)$ for $F$ the hilly landscape transition matrix.
First, using the lower bounds on entries of $F$
derived in Section~\ref{sec: calculations and figs},
we have
\begin{equation*}
 F \geq  \frac{3}{2} \frac{1}{1 + \exp(1)} P.
\end{equation*}
Therefore, by~\eqref{eq: lower bound of random walk sens} and Lemma~\ref{lem: monotonicity},
\begin{equation}\label{eq: random walk lb on hilly landscape F}
\Q_{ij} (\alpha F) \geq \Q_{ij} \left( \frac{3\alpha}{2 (1 + \exp(1))} P \right ).
\end{equation}
Now for any $\beta \in (0,1)$,
\begin{align*}
\Q_{ij}(\beta P) &= \P_i[\tau_j < \min \{\tau_i, \tau_\omega\}](\beta P^\omega) \\
&= \sum_{M=1}^\infty \P_i[\tau_j = M, \min \{\tau_i, \tau_\omega\} > M](\beta P^\omega).
\end{align*}
Let $\lvert i - j \rvert$ denote the minimum number of time steps
required for the chain to reach state $j$ from state $i$.
Adopting the notation used in the proof of Lemma~\ref{lem: monotonicity},
there is some path $\gamma \in \mathscr{P}_{ij}^{\lvert i - j \rvert}$
of length $\lvert i -j \rvert$ for which
\begin{align}
\Q_{ij}(\beta P)
&\geq \P_i[\tau_j = \lvert i -j \rvert, \min \{\tau_i, \tau_\omega\} > \lvert i -j \rvert]
(\beta P^\omega) \nonumber \\
&\geq \P_i[X_k = \gamma(k) \text{ for } k = 1, \dots, \lvert i-j \rvert](\beta P^\omega)
\nonumber \\
&=\left ( \frac{\beta}{3} \right )^{\lvert i - j \rvert}.
\label{eq: lower bound of random walk sens}
\end{align}
Combining~\eqref{eq: random walk lb on hilly landscape F}
and~\eqref{eq: lower bound of random walk sens} then yields
\begin{equation*}
\Q_{ij} (\alpha F) \geq \Q_{ij} \left( \frac{3\alpha}{2 (1 + \exp(1))} P \right )
\geq \left ( \frac{3\alpha}{2 (1 + \exp(1))} \right )^{\lvert i-j \rvert}.
\end{equation*}

\section{Acknowledgements}
This work was funded by the NIH under grant 5 R01 GM109455-02.
We would like to thank Aaron Dinner, Jian Ding, Lek-Heng Lim, and Jonathan Mattingly
for many very helpful discussions and suggestions.

\appendix
\section{Proof of Lemma~\ref{lem: inv dist is differentiable}}
\label{app: proof inv dist is differentiable}

\begin{proof}
Let $F$ be irreducible and stochastic.
By~\cite[Equation~(3.1)]{GolMey:ComputingInvDist}, ${\det(I - F_i) > 0}$ for all $i \in \Omega$, and
\begin{equation}\label{eq: principal minor formula for pi}
  \pi(F)^\t = \frac{1}{\sum_{i=1}^L \det(I- F_i)} (\det(I-F_1), \det(I-F_2), \dots, \det(I-F_L)).
\end{equation}
The right hand side of~\eqref{eq: principal minor formula for pi}
yields the desired extension.
To show this, we first observe that there exists an open neighborhood $\mathcal{V}_F \subset \Real^{L \times L}$
of $F$ and a disc $\mathcal{D}_F \subset \mathbb{C}$ with $1 \in \mathcal{D}_F$ such that $G \in \mathcal{V}_F$ implies
(a) $\sum_i \det(I-G_i) >0$ and (b) $G$ has exactly one eigenvalue in $\mathcal{D}_F$ and that eigenvalue is simple.
There exists a neighborhood with property $(a)$,
since $\sum_i \det(I-G_i)$ is continuous in $G$
and $\sum_i \det(I-F_i) >0$.
The existence of a neighborhood $\mathcal{V}_F$ and disc $\mathcal{D}_F$
with property (b) follows from standard results in perturbation theory,
since $F$ irreducible and stochastic implies that 1 is a simple eigenvalue of $F$;
see~\cite[Ch.\ II, Theorem~5.14]{Kato:PertLinearOps}.
We now let $\mathcal{U}$ be the union of the sets $\mathcal{V}_F$ over all irreducible, stochastic $F$,
and we define $\bar{\pi} : \mathcal{U} \rightarrow \Real^L$ by extending the formula
on the right hand side of~\eqref{eq: principal minor formula for pi}.
By (a), $\bar{\pi}$ is continuously differentiable.
By property (b), we know that if $G \in \mathcal{U}$ with $Ge =e$ then $e$ is a simple eigenvalue of $G$,
and so $\ker (I-G) = \Real e$.
Following the proof of~\cite[Theorem~3.1]{GolMey:ComputingInvDist},
one may then use the identity
\begin{equation*} (\adj (I-G))(I-G) = (I-G) (\adj (I-G)) =0,\end{equation*}
where $\adj(\cdot)$ denotes the adjugate matrix, to show that $\bar{\pi}(G)^t G = \bar{\pi}(G)^\t$.
\end{proof}


\end{document}